\newtheorem{theorem}{Theorem}[section]
\newtheorem{lemma}[theorem]{Lemma}
\newtheorem{proposition}[theorem]{Proposition}
\newtheorem{corollary}[theorem]{Corollary}
\newtheorem{conjecture}[theorem]{Conjecture}
\numberwithin{equation}{section}
\newenvironment{proof}[1][Proof]{\begin{trivlist}
\item[\hskip \labelsep {\bfseries #1}]}{\end{trivlist}}
\newenvironment{definition}[1][Definition]{\begin{trivlist}
\item[\hskip \labelsep {\bfseries #1}]}{\end{trivlist}}
\newenvironment{remark}[1][Remark]{\begin{trivlist}
\item[\hskip \labelsep {\bfseries #1}]}{\end{trivlist}}
\def\Z{{\mathbb Z}}         
\def\N{{\mathbb N}}      
\def\R{{\mathbb R}}       
\def\Q{{\mathbb Q}}       
\def\T{{\mathbb T}}        
\def\F{{\cal F}}                 
\def\B{{\mathcal B}}               
\def\S{{\mathcal S}}               
\def\supp{\text{supp }}
\title{Pointwise Convergence for Subsequences of Weighted Averages}
\author{Patrick LaVictoire\footnote{Supported in part by NSF Grant DMS-0401260.}, UW Madison}
\begin{document}
\maketitle

\begin{abstract}
We prove that if $\mu_n$ are probability measures on $\Z$ such that $\hat \mu_n$ converges to 0 uniformly on every compact subset of $(0,1)$, then there exists a subsequence $\{n_k\}$ such that the weighted ergodic averages corresponding to $\mu_{n_k}$ satisfy a pointwise ergodic theorem in $L^1$.  We further discuss the relationship between Fourier decay and pointwise ergodic theorems for subsequences, considering in particular the averages along $n^2+ \lfloor \rho(n)\rfloor$ for a slowly growing function $\rho$.  Under some monotonicity assumptions, the rate of growth of $\rho'(x)$ determines the existence of a ``good'' subsequence of these averages.
\end{abstract}

\section{Introduction}
Generally speaking, if we have a family of operators $T_n$ on a Banach space $V$ which converge in some weak sense, we might ask whether there exists a subsequence $T_{n_k}$ which converges in some stronger sense.  An important special case here is the contrast between various types of ``convergence in the mean'' and ``convergence almost everywhere'', as for example in the following recent result of Kostyukovsky and Olevskii \cite{MR2177432} on approximate identities:
\begin{definition}
A sequence of functions $\phi_n\in L^1(\R)$ is an \emph{approximate identity on $\R$} if $\|\phi_n\ast f-f\|_1\to0$ as $n\to\infty$, for all $f\in L^1(\R)$.
\end{definition}
\begin{theorem}
Let $\{\phi_n\}$ an approximate identity on $\R$ consisting of nonnegative functions.  Then there is a sequence $\{n_k\}$ such that $\phi_{n_k}\ast f\to f$ a.e. for every $f\in L^1(\R)$.
\end{theorem}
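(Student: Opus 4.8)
The plan is to reduce the statement to a weak-type $(1,1)$ maximal inequality for a sufficiently sparse subsequence, and then to obtain that inequality by splitting each kernel, in frequency, into a ``low'' piece pointwise dominated by the Hardy--Littlewood maximal operator $M$, a ``high'' piece negligible in $L^1$, and a ``middle'' piece whose Fourier transforms live on pairwise disjoint annuli. First one extracts from the hypothesis two elementary facts: $\int\phi_n=\|\phi_n\|_1\to1$ (integrate $\|\phi_n\ast f-f\|_1\to0$ against an $f$ with $\int f\neq0$; recall $\phi_n\ge0$), and $\int_{|y|>\varepsilon}\phi_n\to0$ for every $\varepsilon>0$ (test against $\tfrac1{2a}\mathbf 1_{[-a,a]}$ and restrict the $L^1$-difference to $\{|x|>2a\}$). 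These two facts already give $\phi_n\ast f\to f$ uniformly for $f\in C_c^\infty(\R)$ — split the defining integral at $|y|=\delta$ and let $\delta\to0$ — so a.e.\ convergence holds on a dense subset of $L^1$ \emph{along the full sequence}; by the standard argument that derives a.e.\ convergence from a dense set together with a weak-$(1,1)$ maximal bound, it then suffices to produce a subsequence $\{n_k\}$ with $\bigl\|\sup_k|\phi_{n_k}\ast f|\bigr\|_{L^{1,\infty}}\lesssim\|f\|_1$, and this is the only place a subsequence is needed.

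Fix $\eta\in C_c^\infty$ with $0\le\eta\le1$, $\eta\equiv1$ on $[-1,1]$, $\operatorname{supp}\eta\subset[-2,2]$, and set $\Psi^{(R)}:=\bigl(\eta(\cdot/R)\bigr)^{\vee}=R\,\Psi(R\,\cdot)$ with $\Psi:=\eta^{\vee}\in\mathcal S$. One chooses $n_1<n_2<\cdots$ recursively, together with frequencies $S_1<T_1<S_2<T_2<\cdots\to\infty$ increasing extremely fast (each new term dwarfing everything chosen before, with $S_{k+1}>2T_k$): having fixed $S_k$, put $\delta_k:=1/S_k$, choose $n_k$ so large that $\int_{|y|>\delta_k}\phi_{n_k}\le2^{-k}$, choose a height $N_k$ with $\|(\phi_{n_k}-N_k)_+\|_1\le2^{-k}$, set $\Phi_k:=\min(\phi_{n_k},N_k)\,\mathbf 1_{[-\delta_k,\delta_k]}$ (so $\Phi_k\ge0$ is bounded, supported in $[-\delta_k,\delta_k]$, of bounded mass, and $\|\phi_{n_k}-\Phi_k\|_1\le2^{-k+1}$), and finally — $\Phi_k$ now being a fixed $L^1$ function — choose $T_k$ so large that $\|\Phi_k-\Phi_k\ast\Psi^{(T_k)}\|_1\le2^{-k}$, which is possible since $\int\Psi^{(T_k)}=1$, $\Psi^{(T_k)}$ concentrates near $0$, and translation is $L^1$-continuous. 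Since $\sum_k\|\phi_{n_k}-\Phi_k\|_1<\infty$ and $\sum_k\|\Phi_k-\Phi_k\ast\Psi^{(T_k)}\|_1<\infty$, the two resulting error families contribute maximal operators bounded on $L^1$ and may be discarded; we are reduced to $\sup_k|\Phi_k\ast\Psi^{(T_k)}\ast f|$, which we split as $\Phi_k\ast\Psi^{(T_k)}=\Phi_k\ast\Psi^{(S_k)}+\Phi_k^{\mathrm{mid}}$ with $\widehat{\Phi_k^{\mathrm{mid}}}=\widehat{\Phi_k}\bigl(\eta(\cdot/T_k)-\eta(\cdot/S_k)\bigr)$. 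The low term is harmless: $|\Psi^{(S_k)}\ast f|$ is pointwise $\lesssim P^{(S_k)}\ast|f|$ for the radially decreasing $L^1$ kernel $P^{(S_k)}(u)=S_k(1+S_k|u|)^{-2}$, and convolving $\Phi_k$ into $P^{(S_k)}$ again yields a function $\lesssim P^{(S_k)}$, because $\Phi_k\ge0$ has bounded mass and lives at the very scale $1/S_k$ of $P^{(S_k)}$; hence $\sup_k|\Phi_k\ast\Psi^{(S_k)}\ast f|\lesssim P^{(S_k)}\ast|f|\lesssim Mf$, which is of weak type $(1,1)$.

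Everything therefore comes down to the middle pieces $\Phi_k^{\mathrm{mid}}=\Phi_k\ast\Lambda_k$ with $\Lambda_k:=\Psi^{(T_k)}-\Psi^{(S_k)}$: these satisfy $\|\widehat{\Phi_k^{\mathrm{mid}}}\|_\infty\le\|\Phi_k\|_1\lesssim1$ and have Fourier support in $\{S_k\le|\xi|\le2T_k\}$, hence — by the rapid growth of $\{n_k\}$ — on pairwise disjoint annuli. The $L^2$ maximal estimate is then immediate, $\bigl\|\sup_k|\Phi_k^{\mathrm{mid}}\ast f|\bigr\|_2^2\le\sum_k\|\Phi_k^{\mathrm{mid}}\ast f\|_2^2=\sum_k\int|\widehat{\Phi_k^{\mathrm{mid}}}|^2|\widehat f|^2\lesssim\|f\|_2^2$, and a Calder\'on--Zygmund decomposition at each level $\lambda$ (good part by the $L^2$ bound, bad part by the usual cancellation argument on the atoms) reduces the weak-$(1,1)$ bound to the uniform vector-valued H\"ormander estimate
\[
\int_{|x|>2|h|}\ \sup_k\bigl|\Phi_k^{\mathrm{mid}}(x-h)-\Phi_k^{\mathrm{mid}}(x)\bigr|\,dx\ \le\ C ,\qquad\text{uniformly in }h .
\]
I expect this estimate to be the main obstacle. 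The idea is that $\Lambda_k$ is a fixed-profile Littlewood--Paley kernel carrying only the scales between $1/T_k$ and $1/S_k$; the $\Phi_k$-average is absorbed because $\operatorname{diam}(\operatorname{supp}\Phi_k)=2/S_k$ does not exceed the coarsest scale of $\Lambda_k$; and the scale windows $[1/T_k,1/S_k]$ are themselves disjoint and ordered, so that near any point only boundedly much of the supremum is active and the ordinary H\"ormander bound for a single smooth annular multiplier survives. The difficulty is structural rather than incidental: a nonnegative kernel carries no cancellation and its Fourier transform need not decay, so the intermediate frequency range in which $\widehat{\Phi_k}$ is neither close to $1$ nor small genuinely occurs, and controlling the corresponding maximal operator forces one to exploit both the extreme sparsity of the subsequence (to disjointify the annuli and make the peeled-off errors summable) and the two-scale bookkeeping just described. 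Combining the three bounds yields $\bigl\|\sup_k|\phi_{n_k}\ast f|\bigr\|_{L^{1,\infty}}\lesssim\|f\|_1$, which together with the dense-class convergence proves the theorem.
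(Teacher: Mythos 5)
The paper itself does not prove this theorem; it is quoted as a known result of Kostyukovsky and Olevskii \cite{MR2177432} to motivate the ergodic analogue. So there is no ``paper proof'' to match against. The closest internal comparison is the paper's own Theorem \ref{weakmax}, which proves the $\Z$-analogue (for probability measures with asymptotically trivial transforms). Measured against that, your route is genuinely different, and two points deserve comment.

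\textbf{Different mechanism for the hard part.} The paper's argument performs a Calder\'on--Zygmund decomposition of the \emph{function} $\varphi$, classifies bad cubes as small/intermediate/large relative to $\text{supp }\mu_n$, and — for the small cubes, after peeling off a low-pass average — controls the remaining high-frequency piece by the Fefferman--Christ trick: pass to $L^2$ by Chebyshev and win because the subsequence was chosen so that $\|\hat\mu_n(1-\hat\sigma_n)\|_\infty$ is summably tiny. There is no H\"ormander regularity estimate anywhere; the weak-$(1,1)$ bound is pried out of a quantitatively very strong $L^2$ estimate. You instead decompose the \emph{kernels} (space-truncate and height-truncate $\phi_{n_k}$, low-pass at $T_k$, split off the annulus $S_k\le|\xi|\le 2T_k$), dominate the low piece by $Mf$ using positivity, get $L^2$ cheaply from disjoint Fourier supports, and then appeal to a vector-valued H\"ormander condition for the annular pieces. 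The trade is that your approach avoids the $L^2\to L^{1,\infty}$ Chebyshev step and uses a genuinely nonnegative, bounded, compactly-supported replacement kernel $\Phi_k$, while the paper avoids any uniform kernel smoothness estimate and relies purely on Fourier-side decay plus cancellation of the CZ atoms. Both approaches, notably, use the extreme sparsity of $\{n_k\}$ for exactly the same structural reason: to make errors summable in $\ell^1$ and to separate scales.

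\textbf{The acknowledged gap.} You write that the uniform vector-valued H\"ormander estimate
\[
\int_{|x|>2|h|}\sup_k\bigl|\Phi_k^{\mathrm{mid}}(x-h)-\Phi_k^{\mathrm{mid}}(x)\bigr|\,dx\le C
\]
is ``the main obstacle'' and give only a heuristic. This is the real crux, and the proposal does not discharge it. The estimate does in fact hold under your super-lacunary choice of $S_k,T_k$, but it requires a three-regime argument that you should carry out: (i) for $k$ with $1/T_k\gg|h|$, use $\|(\Phi_k^{\mathrm{mid}})'\|_1\lesssim T_k$ to get a contribution $\lesssim|h|T_k$, summing geometrically to $O(1)$; (ii) for the at most one $k$ with $1/T_k\lesssim|h|\lesssim1/S_k$, use the trivial $\|\Phi_k^{\mathrm{mid}}\|_1\lesssim1$; (iii) for $k$ with $1/S_k\ll|h|$, use that $\Phi_k^{\mathrm{mid}}$ decays like $S_k(1+S_k|x|)^{-N}$ beyond $|x|\gtrsim1/S_k$ (here the compact support of $\Phi_k$ at scale $1/S_k$ and nonnegativity are essential), giving a contribution $\lesssim(S_k|h|)^{1-N}$ that again sums geometrically. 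Your phrase ``only boundedly much of the supremum is active'' is the right intuition but by itself is not a proof; it becomes one only after the above bookkeeping, which depends on inserting the rapid growth of $T_k/S_k$ and $S_{k+1}/T_k$ in the right places. As written, the proposal is a plausible outline with its central estimate unverified, not a complete argument.
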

As noted by Rosenblatt \cite{MR2497325}, this example is analogous to an open question about the pointwise convergence of subsequences of certain weighted ergodic averages.  In that context, the natural analogue to an approximate identity is a sequence of probability measures $\{\mu_n\}$ such that for any ergodic dynamical system $(X,\F,m,\tau)$ and any $f\in L^1$, the weighted averages
\begin{eqnarray}
\mu_{n_k}f(x):=\sum_{j\in\Z} f(\tau^j x)\mu_{n_k}(j)
\end{eqnarray}
converge in the $L^1$ norm to $\int_X f dm$.  This is equivalent (\cite{BJR}, Proposition 1.7b and Corollary 1.8) to the Fourier condition $\hat\mu_n(\gamma)\to0\;\forall \gamma\in(0,1)$.  However, a stronger condition seems to be required for an analogous result:
\begin{definition}
We say that a sequence $\{\mu_n\}$ of probability measures on $\Z$ has \emph{asymptotically trivial transforms} if $\hat \mu_n$ converges to 0 uniformly on every compact subset of $(0,1)$, or equivalently, if $$\sup_{\gamma\in[0,1)}|(1-e(\gamma))\hat \mu_n(\gamma)|\to0,$$
where we denote $e(\gamma)=e^{2\pi i\gamma}$.
\end{definition}
Bellow, Jones and Rosenblatt \cite{BJR} proved the following:
\begin{theorem}
\label{lpresult}
Suppose $\{\mu_n\}$ is a sequence of probability measures on $\Z$ with asymptotically trivial transforms.  Then there exists a subsequence $\{n_k\}$ such that $\mu_{n_k}f(x)$ converges a.e. for every dynamical system $(X,\F,m,\tau)$ and every $f\in L^p$, $p>1$.
\end{theorem}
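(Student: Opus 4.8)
The plan is to pass to the integers via the Calder\'on transference principle and then to establish two things: a maximal inequality
$$\Big\|\sup_k|\mu_{n_k}\ast f|\Big\|_{\ell^p(\Z)}\le C_p\,\|f\|_{\ell^p(\Z)}\qquad(1<p<\infty),$$
and almost everywhere convergence of $\mu_{n_k}f$ for $f$ in a norm-dense subclass of $L^p(X,m)$. Together with the Banach principle these yield the stated conclusion for every $f\in L^p$, $p>1$.

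For the dense class I would take the linear span of the $\tau$-invariant bounded functions and the coboundaries $h-h\circ\tau$, $h\in L^\infty$; this span is dense in $L^p$ for $1<p<\infty$. On an invariant function $\mu_n$ acts as the identity. For a coboundary, writing $U$ for the Koopman operator of $\tau$ and $\sigma_h$ for the spectral measure of $h$, the spectral theorem gives
$$\|\mu_n(I-U)h\|_2^2=\int_{[0,1)}\big|(1-e(\gamma))\hat\mu_n(\gamma)\big|^2\,d\sigma_h(\gamma)\ \le\ \Big(\sup_{\gamma}\big|(1-e(\gamma))\hat\mu_n(\gamma)\big|\Big)^2\|h\|_2^2,$$
which tends to $0$ because the transforms are asymptotically trivial. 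Passing to a subsequence along which $b_k:=\sup_\gamma|(1-e(\gamma))\hat\mu_{n_k}(\gamma)|$ is summable --- in fact I will require $b_k\le 4^{-k}$ --- gives $\sum_k\|\mu_{n_k}(I-U)h\|_2<\infty$, hence $\mu_{n_k}(I-U)h\to 0$ a.e.

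The substance is the maximal inequality. Here the idea is to compare $\mu_{n_k}$ with the Ces\`aro average $A_{N_k}$ over $\{0,1,\dots,N_k-1\}$ at the \emph{matched} scale $N_k:=\lfloor b_k^{-1/2}\rfloor$, writing
$$\mu_{n_k}\ast f=A_{N_k}\ast(\mu_{n_k}\ast f)+\nu_k\ast f,\qquad \nu_k:=\mu_{n_k}\ast(\delta_0-A_{N_k}).$$
The ``smoothed'' term $A_{N_k}\ast(\mu_{n_k}\ast f)$ should be controlled by the classical Hardy--Littlewood maximal inequality and the Birkhoff theorem --- using that $\mu_n\ast f\to0$ in $\ell^2(\Z)$, so that $\mu_{n_k}\ast f=\sum_{i\ge k}(\mu_{n_i}-\mu_{n_{i+1}})\ast f$ and the supremum over $k$ is dominated by $\sum_i\mathcal M\big((\mu_{n_i}-\mu_{n_{i+1}})\ast f\big)$, with $\mathcal M$ the Hardy--Littlewood maximal operator --- so this term is bounded on every $\ell^p$, $p>1$, and converges a.e. The error term is controlled by the square function $\big(\sum_k|\nu_k\ast f|^2\big)^{1/2}$. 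The reason for the scale $N_k\sim b_k^{-1/2}$ is that $\hat\nu_k(\gamma)=\hat\mu_{n_k}(\gamma)\big(1-\widehat{A_{N_k}}(\gamma)\big)$ satisfies, with $\|\gamma\|$ the distance from $\gamma$ to $\Z$,
$$|\hat\nu_k(\gamma)|\ \le\ \min\!\Big(1,\tfrac{Cb_k}{\|\gamma\|}\Big)\cdot\min\!\big(1,\,CN_k\|\gamma\|\big),$$
the factor $1-\widehat{A_{N_k}}$ providing the decay near $\gamma=0$ that the cruder bound $|\hat\nu_k|\le\min(1,Cb_k/\|\gamma\|)$ lacks; a short case analysis then gives $\sum_k|\hat\nu_k(\gamma)|^2\le C$ uniformly in $\gamma$, so that (by $\sup_k|\nu_k\ast f|\le(\sum_k|\nu_k\ast f|^2)^{1/2}$ and Plancherel) the operator $\sup_k|\nu_k\ast f|$ is bounded on $\ell^2(\Z)$. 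Interpolation with the trivial bound $\sup_k|\nu_k\ast f|\le 2\|f\|_\infty$ covers $2\le p<\infty$; for $1<p<2$ I would verify the H\"ormander regularity condition for the $\ell^2$-valued convolution kernel $x\mapsto(\nu_k(x))_k$ --- using that $\sum_j|\mu_{n_k}(j)-\mu_{n_k}(j-1)|^2\le b_k^2$ by Plancherel, which says $\mu_{n_k}$ varies slowly --- and invoke Calder\'on--Zygmund theory for Banach-valued singular integrals.

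I expect the principal obstacle to be exactly this square-function estimate and its extension off $L^2$: obtaining the $\ell^2$ bound with \emph{no} logarithmic loss (which forces the choice $N_k\sim b_k^{-1/2}$), and then reaching all $p>1$, where one must handle the Hardy--Littlewood part of $\mu_{n_k}\ast f$ --- delicate because $\mu_{n_k}$ itself need not be dominated by any classical maximal operator, so a careless splitting reintroduces $\sup_k\mu_{n_k}\ast(\,\cdot\,)$ on the ``good'' side --- and run the Calder\'on--Zygmund argument for the error using only the slow-variation information carried by $b_k$.
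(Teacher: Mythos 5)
The paper does not reproduce the proof of this theorem (it is quoted from [BJR], with the remark that it ``was proved by an analysis of square functions and by interpolation from $L^2$ to $L^p$''), so I am comparing your proposal against that described strategy and against the closely parallel argument the paper gives for Theorem \ref{weakmax}.

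Most of your outline is sound and in the same spirit as [BJR]: transference, a dense class of invariants plus bounded coboundaries with the spectral-theorem estimate $\|\mu_n(I-U)h\|_2\le b_n\|h\|_2$, a square function for the high-frequency error, $\ell^2$ via Plancherel, and then interpolation / Calder\'on--Zygmund theory for $p\ne2$. The computation for $\nu_k=\mu_{n_k}\ast(\delta_0-A_{N_k})$ is correct: with $N_k\sim b_k^{-1/2}$ one gets $|\hat\nu_k(\gamma)|\le Cb_k^{1/2}$ \emph{uniformly}, so $\sum_k|\hat\nu_k|^2\le C\sum_k b_k<\infty$; this is a clean way to see the $\ell^2$ bound for the error.

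The gap is in the ``smoothed'' term $A_{N_k}\ast\mu_{n_k}\ast f$. The telescoping $\mu_{n_k}\ast f=\sum_{i\ge k}(\mu_{n_i}-\mu_{n_{i+1}})\ast f$ is a valid identity in $\ell^2$, but the resulting bound $\sup_k|A_{N_k}\ast\mu_{n_k}\ast f|\le\sum_i\mathcal M\bigl((\mu_{n_i}-\mu_{n_{i+1}})\ast f\bigr)$ is useless without a summable rate for $\|(\mu_{n_i}-\mu_{n_{i+1}})\ast f\|_p$, and no such rate follows from the hypotheses. Asymptotic triviality controls $\hat\mu_n$ only away from $\gamma=0$; near $0$ one has no information beyond $\hat\mu_n(0)=1$, and in general $\|\hat\mu_{n_i}-\hat\mu_{n_{i+1}}\|_\infty$ stays bounded away from $0$ along \emph{every} subsequence. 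A concrete instance: $\mu_n$ the uniform measure on $\{n^{10}+1,\dots,n^{10}+n\}$ has $b_n\le 2/n$ (so asymptotically trivial transforms), yet for any $m<n$, $\hat\mu_m-\hat\mu_n$ is of order $1$ at frequencies $\gamma\sim n^{-10}$ because the two transforms oscillate at wildly different low-frequency scales. So the series $\sum_i\mathcal M\bigl((\mu_{n_i}-\mu_{n_{i+1}})\ast f\bigr)$ need not lie in $\ell^p$.

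The missing ingredient is the one the paper uses in the proof of Theorem \ref{weakmax}: first reduce to $\mu_n$ compactly supported, and then choose the subsequence so that the Fourier decay $b_k$ of $\mu_{n_k}$ is extremely small \emph{relative to the size of the supports of the earlier terms} (there: $b_k\le 2^{-2S(n_{k-1})-2k}$), with the Ces\`aro scale tied to those supports rather than to $b_k^{-1/2}$. With that coupling, for each dyadic scale $s$ at most one $k$ falls in the ``intermediate'' range, the large-support terms land inside enlarged cubes (or, in the $L^p$ argument, are dominated by Hardy--Littlewood because $\mu_{n_k}\ast A_{N_k}$ becomes pointwise comparable to a single normalized indicator once $N_k$ exceeds the support radius), and the cancellation / square-function estimate handles the rest. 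Your choice $b_k\le 4^{-k}$, $N_k=\lfloor b_k^{-1/2}\rfloor$ is decoupled from the supports, which is exactly why the low-frequency piece escapes control. So the square-function half of your plan is right, but the treatment of the low-frequency piece as written does not close; it must be replaced by a truncation-plus-coupled-subsequence argument of the type in Section 2.
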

This was proved by an analysis of square functions and by interpolation from $L^2$ to $L^p$, which left open the $L^1$ question (see Section 4 in \cite{BJR}).  In this paper, I prove the following weak-type (1,1) maximal inequality on $\Z$:
\begin{theorem}
\label{weakmax}
Suppose $\{\mu_n\}$ has asymptotically trivial transforms.  Then there is a subsequence $\{n_k\}$ which obeys the weak type maximal inequality 
\begin{eqnarray*}
|\{x:\sup_k |\varphi\ast\mu_{n_k}(x)|>\lambda\}|\leq C\lambda^{-1}\|\varphi\|_{\ell^1(\Z)}\;\forall\varphi\in\ell^1(\Z).
\end{eqnarray*}
\end{theorem}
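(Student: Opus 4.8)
The plan is to pin down the subsequence in two stages and then run a Calder\'on--Zygmund argument directly on $\Z$. First, by (the proof of) Theorem~\ref{lpresult}, after passing to a subsequence I may assume that the maximal operator $\varphi\mapsto\sup_k|\varphi*\mu_{n_k}|$ is bounded on $\ell^2(\Z)$. Writing $\epsilon_n=\sup_{\gamma\in[0,1)}|(1-e(\gamma))\hat\mu_n(\gamma)|$, so that $\epsilon_n\to0$, I would then thin once more so that $\epsilon_{n_k}$ decays as fast as desired (say $\epsilon_{n_k}\le 4^{-k}$), and in fact so that the ``smoothing scales'' of the $\mu_{n_k}$ grow super-lacunarily. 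Now fix $\lambda>0$ and $\varphi\in\ell^1(\Z)$ and perform the Calder\'on--Zygmund decomposition at height $\lambda$: $\varphi=g+b$ with $\|g\|_\infty\le C\lambda$, $\|g\|_{\ell^2}^2\le C\lambda\|\varphi\|_{\ell^1}$, and $b=\sum_Q b_Q$, where $Q$ runs over a disjoint family of dyadic intervals, $\supp b_Q\subseteq Q$, $\sum_{j}b_Q(j)=0$, $\|b_Q\|_{\ell^1}\le C\lambda|Q|$, and $\sum_Q|Q|\le C\lambda^{-1}\|\varphi\|_{\ell^1}$.

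The contribution of $g$ is routine. By Chebyshev and the $\ell^2$ maximal bound just arranged,
\[
\bigl|\{x:\sup_k|g*\mu_{n_k}(x)|>\lambda\}\bigr|\le \lambda^{-2}\bigl\|\sup_k|g*\mu_{n_k}|\bigr\|_{\ell^2}^2\le C\lambda^{-2}\|g\|_{\ell^2}^2\le C\lambda^{-1}\|\varphi\|_{\ell^1}.
\]
So the whole problem reduces to controlling $\sup_k|b*\mu_{n_k}|$ off an exceptional set $\Omega$ with $|\Omega|\le C\lambda^{-1}\|\varphi\|_{\ell^1}$.

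The bad part is the heart of the matter, and here I would exploit cancellation against the Fourier hypothesis, organized by scale. For a fixed $k$ and a fixed atom $b_Q$ with $|Q|=2^s$ and center $c_Q$, the mean-zero property gives $b_Q*\mu_{n_k}(x)=\sum_{j\in Q}\bigl(\mu_{n_k}(x-j)-\mu_{n_k}(x-c_Q)\bigr)b_Q(j)$; since the discrete derivative $\delta\mu_{n_k}$ has Fourier transform $(1-e(\gamma))\hat\mu_{n_k}(\gamma)$ of sup-norm $\le\epsilon_{n_k}$, Cauchy--Schwarz bounds the $\ell^1$-mass of $\mu_{n_k}(\cdot-j)-\mu_{n_k}(\cdot)$ over any interval of length $\lesssim 2^{s}$ by $\lesssim 2^{s/2}\epsilon_{n_k}$. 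In the regime where $\mu_{n_k}$ is much smoother than $Q$ this factor is tiny; grouping atoms of common size $2^{s}$, summing the resulting geometric series in $k$ (this is where the fast decay of $\epsilon_{n_k}$, i.e.\ the super-lacunary thinning, is used) and then over $s$, one gets a bound $\lesssim\lambda\sum_Q|Q|\lesssim\|\varphi\|_{\ell^1}$ for these terms without even using $\Omega$. The opposite regime --- indices $k$ whose smoothing scale is comparable to or below $2^{s}$, where $b_Q*\mu_{n_k}$ need not be small --- is where I expect the real difficulty: one must show such $b_Q*\mu_{n_k}$ is essentially supported on a translate of a fixed dilate of $Q$ (hence absorbable into $\Omega$), or else handle it through the $\ell^2$ maximal bound applied to suitably truncated pieces of $\mu_{n_k}$. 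Making this dichotomy rigorous seems to require a quantitative structure theorem for measures with (nearly) trivial transforms --- morally, that such a $\mu_n$ splits as a positive measure dominated, up to translation, by a single Cesàro average $A_M$ plus a remainder of small $\ell^2\to\ell^2$ norm --- together with the super-lacunary subsequence ensuring that only boundedly many indices fall in the delicate regime at each Calder\'on--Zygmund scale. Verifying that structural splitting, and summing the ``comparable-scale'' contributions uniformly over all scales, is the step I anticipate will be the main obstacle.
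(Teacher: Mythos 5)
You have the right high-level architecture---a Calder\'on--Zygmund decomposition on $\Z$ with the good part killed by an $\ell^2$ bound and the bad part attacked by a Fefferman--Christ-style ``strong $\ell^2$ controls weak $\ell^1$'' move---and you are honest in flagging where the argument is not yet closed. That flagged step is indeed the crux, and as written the proposal does not contain the mechanism that makes it go.

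Two concrete problems. First, the Cauchy--Schwarz step is not what you want: $\|(1-e(\cdot))\hat\mu_{n}\|_\infty\le\epsilon_n$ gives $\|\mu_n(\cdot-1)-\mu_n(\cdot)\|_{\ell^2}\le\epsilon_n$, hence by telescoping $\|\mu_n(\cdot-j)-\mu_n(\cdot)\|_{\ell^2}\le |j|\epsilon_n$. To pass to $\ell^1$ you must Cauchy--Schwarz over the \emph{support} of this difference, which is an interval of size comparable to $\operatorname{supp}\mu_n$, not $2^s$; that produces a factor $|\operatorname{supp}\mu_n|^{1/2}$, which blows up with $n$ and cannot be absorbed by any fixed rate of decay $\epsilon_{n_k}\le 4^{-k}$. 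So the ``tiny factor'' you claim in the smooth regime is not actually produced this way. Second, the dichotomy you set up (smoothing scale of $\mu_{n_k}$ large vs.\ small relative to $2^s$) is two cases; the paper's argument needs three. If $2^s\ge\operatorname{supp}\mu_{n_k}$, then $b_Q*\mu_{n_k}$ is supported in $Q^\star$ and is absorbed by the exceptional set---that's the easy ``large'' case. But there is also an ``intermediate'' band of scales $s$, between the support of $\mu_{n_k-1}$ and of $\mu_{n_k}$, where neither argument applies; the paper handles those by the trivial $\ell^1$ bound $\|b_s*\mu_{N(s)}\|_1\le\|b_s\|_1$, which works precisely because each scale $s$ is intermediate for only one index $N(s)$, provided one has chosen the subsequence so that the support radii $S(n_k)$ are strictly increasing. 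Your proposal collapses these into the single ``comparable-scale'' bucket and then hopes for a structural splitting theorem to rescue it.

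The missing idea is that no such structure theorem is needed: one introduces an explicit Ces\`aro kernel $\sigma_n=2^{-S(n-1)-n}\chi_{[1,2^{S(n-1)+n}]}$ and splits $\mu_n=\mu_n*\sigma_n+\mu_n*(\delta_0-\sigma_n)$. For the first piece, the CZ cancellation is run against the explicit Lipschitz regularity of $\sigma_n$ (not of $\mu_n$ itself), which gives the clean estimate $\|\sigma_n*b_{s,k}\|_1\lesssim 2^{s-S(n-1)-n}\|b_{s,k}\|_1$ that sums geometrically. For the second piece, one never passes to $\ell^1$ at all: $\|\hat\mu_n(1-\hat\sigma_n)\|_\infty\le 2^{S(n-1)+n}\sup_\gamma|(1-e(\gamma))\hat\mu_n(\gamma)|$, and the subsequence is chosen so that the right side is $\le 2^{-S(n-1)-n}$; Chebyshev and Plancherel then give a weak-$(1,1)$ bound directly (using $\|b_s\|_2^2\lesssim\lambda 2^s\|b_s\|_1$). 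Notice that the required decay of $\epsilon_{n_k}$ is not merely superexponential: it must dominate $2^{-2S(n_{k-1})}$, i.e.\ it is tied to the support radius of the \emph{previous} measure. Your ``super-lacunary'' thinning, as stated in terms of $\epsilon_{n_k}$ alone, does not enforce this coupling and so will not close the estimate.
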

Given Theorem \ref{lpresult} and the Conze principle \cite{Conze}, this implies the full $L^1$ result:
\begin{corollary}
\label{l1result}
Suppose $\{\mu_n\}$ has asymptotically trivial transforms.  Then there exists a subsequence $\{n_k\}$ such that $\mu_{n_k}f(x)$ converges a.e. for every dynamical system $(X,\F,m,\tau)$ and every $f\in L^1(X)$.
\end{corollary}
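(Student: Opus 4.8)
The plan is to deduce the corollary from its three stated ingredients by the usual route: pass to a single subsequence that inherits all the needed properties, transfer the $\ell^1(\Z)$ weak maximal inequality of Theorem \ref{weakmax} to an arbitrary dynamical system via the Conze principle, and then run a Banach-principle argument to upgrade a.e.\ convergence on a dense subclass to a.e.\ convergence on all of $L^1$.

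First I would fix a subsequence enjoying simultaneously the conclusions of Theorems \ref{lpresult} and \ref{weakmax}. Applying Theorem \ref{weakmax} produces a subsequence $\{n_k\}$ satisfying the weak-type $(1,1)$ bound for $\varphi\mapsto\sup_k|\varphi\ast\mu_{n_k}|$ on $\ell^1(\Z)$. Since $\{\mu_{n_k}\}$ is again a sequence with asymptotically trivial transforms (this property is inherited by any subsequence, being merely the assertion that a certain sequence of numbers tends to $0$), Theorem \ref{lpresult} applied to $\{\mu_{n_k}\}$ yields a further subsequence $\{n_{k_j}\}$ along which $\mu_{n_{k_j}}f$ converges a.e.\ for every dynamical system and every $f\in L^p$, $p>1$; moreover the $\ell^1(\Z)$ weak maximal inequality persists for $\{n_{k_j}\}$ because thinning the family only decreases the maximal function. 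Relabel this subsequence as $\{n_k\}$.

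Next I would transfer the maximal inequality. Fix a dynamical system $(X,\F,m,\tau)$ with $m$ a probability measure, and a finite sub-family $\{n_1,\dots,n_K\}$. Each $\mu_{n_i}\in\ell^1(\Z)$, so there is an $M$ with $\sum_{|j|>M}\mu_{n_i}(j)<\varepsilon$ for all $i\le K$; truncating to $[-M,M]$ and carrying out the standard Calderón transference computation (the Conze principle) over orbit windows of length comparable to $M$ — the boundary contributions being controlled by this $\ell^1$-tail estimate — gives
\[
m\bigl\{x:\ \sup_{k\le K}|\mu_{n_k}f(x)|>\lambda\bigr\}\le C\lambda^{-1}\|f\|_{L^1(X)}
\]
with the same constant $C$ as in Theorem \ref{weakmax}, uniformly in $K$. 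Letting $K\to\infty$ by monotone convergence yields the weak-type $(1,1)$ maximal inequality for $\sup_k|\mu_{n_k}f|$ on $L^1(X)$.

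Finally I would run the Banach principle. Set $\Omega f=\limsup_k\mu_{n_k}f-\liminf_k\mu_{n_k}f$; this functional is subadditive and dominated by $2\sup_k|\mu_{n_k}f|$, hence obeys the same weak-type $(1,1)$ bound. For $g\in L^\infty(X)\subset L^2(X)$ we have $\Omega g=0$ a.e.\ by Theorem \ref{lpresult}. Given $f\in L^1(X)$ and $\varepsilon>0$, write $f=g+h$ with $g\in L^\infty$ and $\|h\|_{L^1}<\varepsilon$; then $\Omega f\le\Omega g+\Omega h=\Omega h$ a.e., so $m\{\Omega f>\lambda\}\le C\lambda^{-1}\varepsilon$ for every $\lambda>0$, and letting $\varepsilon\to0$ forces $\Omega f=0$ a.e., i.e.\ $\mu_{n_k}f$ converges a.e. I expect the one genuinely delicate point — the step worth writing out in full — to be the transference itself in the presence of possibly infinitely supported $\mu_n$: one must dispose of the orbit-window boundary terms via the $\ell^1$-tail bound before the averaging-over-translates argument applies; the remaining steps are routine.
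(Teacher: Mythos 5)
Your proposal matches the paper's (unwritten) proof exactly: the paper deduces the corollary in one line from Theorem \ref{weakmax}, the Conze/Calder\'on transference, Theorem \ref{lpresult}, and the Banach principle, and you have correctly filled in precisely those steps, including the observation that both properties persist under further thinning of the subsequence. Your treatment of the orbit-window boundary terms via $\ell^1$ tails and the $\Omega f$ oscillation argument are the standard ways to make these steps rigorous, and they are carried out correctly.
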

The next question is whether our stronger hypothesis (that $\{\mu_n\}$ has asymptotically trivial transforms) can be replaced by the weaker one (that $\hat\mu_n(\gamma)\to0$ for all $\gamma\in(0,1)$).  If we remove the requirement that the $\mu_n$ are positive measures (but still require $\|\mu_n\|_1\leq1$), we can provide an explicit counterexample:
\begin{proposition}
\label{rotsqrs}
Consider the measures $\mu_n:=\displaystyle\frac1n\sum_{j=1}^n \delta_{j^2}e(n^{-1/2}j)$.  Then $\hat\mu_n(\gamma)\to0$ for all $\gamma\in[0,1)$, but for any subsequence $\{n_k\}$ and any (non-atomic) ergodic dynamical system $(X,\F,m,\tau)$, there exists an $f\in L^1(X)$ such that $\mu_{n_k}f(x)$ diverges on a set of positive measure in $X$.
\end{proposition}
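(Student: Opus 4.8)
The statement $\hat\mu_n(\gamma)\to0$ for every $\gamma$ is a Weyl-sum exercise, and the role of the linear modulation is precisely to suppress the otherwise-trivial frequencies. Writing $\hat\mu_n(\gamma)=\frac1n\sum_{j=1}^n e(\gamma j^2+n^{-1/2}j)$: for irrational $\gamma$ the van der Corput/Weyl differencing bound $\bigl|\sum_{j\le n}e(\gamma j^2+\beta j)\bigr|^2\le n+2\sum_{h=1}^{n-1}\min\!\bigl(n,\|2\gamma h\|^{-1}\bigr)$ gives $o(n)$ uniformly in the linear coefficient $\beta$; for $\gamma=a/q$ the map $j\mapsto e(aj^2/q)$ is $q$-periodic, so splitting into residue classes mod $q$ leaves $q$ geometric sums in $e(n^{-1/2}j)$, each $\ll q^{-1}n^{1/2}$ once $n$ is large, whence $|\hat\mu_n(a/q)|\ll n^{-1/2}$; the case $\gamma=0$ is $q=1$. (Pushing the second-derivative test a little further in fact yields $\|\hat\mu_n\|_\infty\ll n^{-1/4+\varepsilon}$, so these measures even have asymptotically trivial transforms — it is the failure of positivity, not of Fourier decay, that drives the divergence.)

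\textbf{Part 2 (the reduction for divergence).} Suppose toward a contradiction that, for some subsequence $\{n_k\}$, $\mu_{n_k}f$ converges a.e.\ for every $f\in L^1(X)$. Each $\mu_{n_k}$ is an $L^1$-contraction commuting with the ergodic transformation $\tau$, so by the Stein--Sawyer maximal principle (the converse direction to the Conze principle invoked above) this forces the weak type $(1,1)$ inequality $m\{x:\sup_k|\mu_{n_k}f(x)|>\lambda\}\le C\lambda^{-1}\|f\|_1$ on $X$; copying $\ell^1(\Z)$ data into a Rokhlin tower of height $\gg n_K^2$ then transfers this, with the same $C$, to the statement that for every fixed $K$,
\[
\bigl|\{x\in\Z:\sup_{k\le K}|\mu_{n_k}\ast\varphi(x)|>\lambda\}\bigr|\le C\lambda^{-1}\|\varphi\|_{\ell^1(\Z)}\qquad\text{for all }\varphi\in\ell^1(\Z).
\]
So it suffices to prove: for \emph{every} subsequence $\{n_k\}$ and every $C$ there are $K$ and a finitely supported $\varphi$ violating this inequality; the Sawyer--Nikishin continuity principle then upgrades the resulting failure (transferred back to $X$ by the same Rokhlin-tower copying) to a single $f\in L^1(X)$ with $\mu_{n_k}f$ divergent on a set of positive measure. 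Note that on $\Z$ itself $\mu_{n_k}\ast\varphi\to0$ pointwise for every $\varphi\in\ell^1$ (since $\|\mu_{n_k}\|_{\ell^\infty}=1/n_k\to0$), so what has to blow up is genuinely the maximal constant, not a pointwise limit — the divergence lives only in the finite-measure/tower picture.

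\textbf{Part 3 (the construction, and the main obstacle).} The plan for the remaining step is to run a Buczolich--Mauldin-style recursive construction, with the modulation $e(n^{-1/2}j)$ supplying the ingredient that lets a sparse subsequence still carry the argument. The point is that over $j\in[1,n]$ the phase $n^{-1/2}j$ makes $\sim n^{1/2}$ full revolutions, so a single $\mu_n$ behaves like a whole band of scales: restricting $j$ to a window $[m,m+W]$ isolates a linearly modulated sparse average of rate $n^{-1/2}$, and taking $\varphi$ on the arithmetic set $x-\{(m+i)^2\}_{i\le W}$ equal to the conjugate chirp $\overline{e(n^{-1/2}(m+i))}$ makes $|\mu_n\ast\varphi|$ attain, on that window, its trivial maximum. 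One then assembles $\varphi=\sum_k c_k\varphi_k$, one family of matched chirps per scale $n_k$, placed in arithmetically well-separated blocks — the separation quantified by bounding the number of solutions of $s^2-t^2=d$ for the relevant displacements $d$ — so that the cross terms $\mu_{n_k}\ast\varphi_i$ ($i\ne k$) miss the ``large set'' produced by $\mu_{n_k}\ast\varphi_k$, while the large sets at successive scales still overlap efficiently. I expect the genuinely hard part to be exactly this last bookkeeping: arranging that the accumulated counting measure of the large sets exceeds $C\|\varphi\|_{\ell^1}$ for every prescribed $C$ while all scales remain mutually non-interfering. Here the modulation does double duty — it realizes the extremal chirp configuration at each individual scale (the per-scale gain that the unmodulated square averages lack along a sparse subsequence), and, because the rates $n_k^{-1/2}$ are pairwise distinct, it suppresses the cross-scale cancellation that would otherwise destroy that gain. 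Everything else — the reductions of Part 2, the transference in both directions, and the soft functional-analytic steps — is routine.
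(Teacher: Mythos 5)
The genuine gap is in Part~3, which you explicitly leave unfinished: you sketch a Buczolich--Mauldin-style recursion, identify the quantitative separation of scales as the hard part, and stop. That bookkeeping is the \emph{whole} difficulty of a from-scratch argument, and nothing in Parts~1--2 shortcuts it. But no construction is needed. The intended measure (the statement contains a typo: the modulation should read $e(n^{-1/2}j^2)$, i.e.\ $\mu_n(m)=\nu_n(m)e(n^{-1/2}m)$ with $\nu_n=\frac1n\sum_{j\le n}\delta_{j^2}$ the unmodulated square averages --- both identities used in the paper's proof require this) is a pure spatial modulation of $\nu_n$. Consequently
\[
\mu_n\ast\varphi(k)=e(n^{-1/2}k)\bigl(\nu_n\ast(e(-n^{-1/2}\cdot)\varphi)\bigr)(k),
\]
so $|\mu_n\ast\varphi|=|\nu_n\ast\varphi'|$ with $\|\varphi'\|_{\ell^1}=\|\varphi\|_{\ell^1}$, and a weak~$(1,1)$ maximal inequality along any subsequence for $\{\mu_{n_k}\}$ is \emph{equivalent} to one for $\{\nu_{n_k}\}$. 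The negative half of the proposition is then exactly the cited theorem of \cite{LaVic2} that $\{\nu_n\}$ is persistently universally $L^1$-bad (no subsequence of the square averages obeys a weak~$(1,1)$ maximal inequality), combined with the converse Conze principle that you correctly invoke in Part~2. Your proposed recursive construction is redundant and, as presented, not a proof.

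The same shift identity drives Part~1: $\hat\mu_n(\gamma)=\hat\nu_n(\gamma+n^{-1/2})$, and for each fixed $\gamma$ the translate $\gamma+N^{-1/2}$ eventually leaves the Dirichlet arc of every fixed rational $a/b$ (if $\gamma\ne a/b$ then eventually $|\gamma-a/b|>N^{-1/2}+b^{-1}N^{-4/3}$; if $\gamma=a/b$ then $N^{-1/2}\ge b^{-1}N^{-4/3}$), so the approximating denominator $q_N\to\infty$ and the Weyl estimate gives $\hat\nu_N(\gamma+N^{-1/2})\to0$. Your residue-class computation at rational $\gamma$ is a reasonable alternative, but your parenthetical claim that $\{\mu_n\}$ even has asymptotically trivial transforms is false for the intended measure: at $\gamma=1/3-n^{-1/2}$ one has $\hat\mu_n(\gamma)=\hat\nu_n(1/3)\to\hat\Lambda(1/3)\ne0$ while $|1-e(\gamma)|\to|1-e(1/3)|>0$, so the uniform decay fails. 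That failure is the whole point of the example: it separates pointwise Fourier decay from asymptotic triviality, showing that without positivity the weaker condition does not yield a good $L^1$ subsequence.
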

\begin{remark}
Note that the proof of a.e. convergence for $L^2$ functions (Theorem 1.14 in \cite{BJR}) does not require positivity of the measures; if we trace it through, it requires only the assumption that the $\mu_n$ have asymptotically trivial transforms, bounded $\ell^1$ norm, and that $\lim_{n\to\infty} \hat\mu_n(1)$ exists.
\end{remark}
With the hypothesis of positivity restored, the above question remains open.  However, we strongly suspect that some type of uniformity is essential, and that some positive variant on the above example can be made to work.
\begin{conjecture}
There exists a sequence of probability measures $\{\mu_n\}$ such that $\hat\mu_n(\gamma)\to0$ for all $\gamma\in(0,1)$, but for any subsequence $\{n_k\}$ and any (non-atomic) ergodic dynamical system $(X,\F,m,\tau)$, there exists an $f\in L^1(X)$ such that $\mu_{n_k}f(x)$ diverges on a set of positive measure in $X$.
\end{conjecture}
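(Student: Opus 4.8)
The statement has two halves of very different character. The Fourier estimate is a routine exponential sum computation: writing $\widehat\mu_n(\gamma)=\frac1n\sum_{j=1}^n e(\gamma j^2+n^{-1/2}j)$, for irrational $\gamma$ Weyl's differencing inequality gives
\[
|\widehat\mu_n(\gamma)|^2\ \le\ \frac1{n^2}\sum_{|h|<n}\min\!\Big(n,\ \frac1{2\|2\gamma h\|}\Big),
\]
in which the linear modulation has cancelled out, and the right side tends to $0$ by equidistribution of $\{2\gamma h\}_h$; while for rational $\gamma=a/q$ in lowest terms (this includes $\gamma=0$ and $\gamma=\tfrac12$), grouping $j$ by residue class mod $q$ turns each inner sum into a geometric series with ratio $e(n^{-1/2}q)$, so once $n>4q^2$ each has modulus at most $\sqrt n/(2q)$, whence $|\widehat\mu_n(a/q)|\le1/(2\sqrt n)\to0$. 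All the work is in the divergence claim, and my plan for it has three steps.

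\emph{Step 1: reduce to a maximal inequality.} (The divergence turns out to be a purely $L^1$ phenomenon --- in accordance with the remark above, the $L^2$ pointwise theory survives --- so the bad $f$ necessarily lies in $L^1\setminus L^2$ and the obstruction is a heavy-tail one.) Fix a non-atomic ergodic $(X,\F,m,\tau)$; it then suffices, for each subsequence $\{n_k\}$ separately, to show that $\varphi\mapsto\sup_k|\varphi*\mu_{n_k}|$ is \emph{not} of weak type $(1,1)$ on $\ell^1(\Z)$. Indeed, by Conze's transference principle \cite{Conze} --- in the direction carrying the failure of a weak-type $(1,1)$ maximal inequality on $\Z$ over to any aperiodic system --- together with the Sawyer--Stein maximal principle for a sequence of operators commuting with an ergodic transformation, which needs only the uniform bound $\|\mu_{n_k}\|_{\ell^1}\le1$ and translation invariance rather than positivity (or, if one prefers, split each $\mu_{n_k}$ into four nonnegative parts), any such failure produces on $X$ an $f\in L^1$ with $\sup_k|\mu_{n_k}f|=\infty$ on a set of positive measure, and there $\mu_{n_k}f(x)$ is unbounded, hence divergent. (Equivalently, the construction below may be carried out directly on Rokhlin towers in $X$.)

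\emph{Step 2: refute the maximal inequality.} The goal is to produce, for every $\Lambda$, a finitely supported $\varphi$ with $\|\varphi\|_{\ell^1}$ as small as desired and
\[
\big|\{\,y:\ \textstyle\sup_k|\varphi*\mu_{n_k}(y)|>1\,\}\big|\ \ge\ \Lambda\,\|\varphi\|_{\ell^1}.
\]
A single bump cannot achieve this, since an individual $\mu_{n_k}$ is an $\ell^1$ contraction and so already obeys the weak-$(1,1)$ bound with constant $1$; the gain must come from choosing the support of $\varphi$ so that the different averages $\mu_{n_k}$ \emph{refocus} its mass coherently onto a set much larger than $\supp\varphi$. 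Concretely, one wants many $y$ for which some $k$ admits a positive-density set of $j\le n_k$ with $y-j^2\in\supp\varphi$ and the phases $e(n_k^{-1/2}j)$ in those terms essentially aligned. The device is to build $\varphi$ on a union of short blocks sitting at integers that occur with high multiplicity as values of $y-j^2$ --- i.e. to exploit the additive structure of the squares modulo the relevant moduli --- in the spirit of the Buczolich--Mauldin construction of an $L^1$ function whose square averages $\frac1n\sum_{j\le n}f(\tau^{j^2}x)$ diverge a.e. The modulation itself is harmless: $e(n_k^{-1/2}j)$ is nearly constant on any block of $j$'s of length $o(\sqrt{n_k})$, so one runs the construction at scale $\sqrt{n_k}$ rather than $n_k$ and absorbs the slow drift.

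\emph{Step 3 is the main obstacle: uniformity over all subsequences.} A construction that uses several scales $n_k$ falling into one fixed multiplicative window handles every subsequence with $\limsup_k n_{k+1}/n_k<\infty$; but for highly lacunary subsequences the admissible scales are too sparse, and one is then forced to take $\varphi$ to be a single multiscale function matched to an infinite sub-subsequence of scales and to manufacture the phase coincidences needed between far-apart $\mu_{n_k}$ out of the divisor structure of the scale ratios. This combinatorial and number-theoretic core --- together with the bookkeeping that keeps the $e(n_k^{-1/2}j)$ modulation from ever spoiling a required coherence --- is where essentially all the difficulty lies; the rest is transference and soft functional analysis.
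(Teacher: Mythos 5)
This statement is explicitly labeled a \textbf{Conjecture} in the paper, and the paper offers no proof of it; immediately before stating it the author writes that ``with the hypothesis of positivity restored, the above question remains open.'' So there is nothing in the paper to compare your sketch against, and the first thing to flag is that you are treating an open problem as a theorem with a known argument.

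More substantively, your sketch does not actually address the conjecture: it addresses Proposition~\ref{rotsqrs}, which is a different (and weaker) statement. The Fourier transform you write down, $\widehat\mu_n(\gamma)=\frac1n\sum_{j=1}^n e(\gamma j^2+n^{-1/2}j)$, is exactly that of the complex measures $\mu_n=\frac1n\sum_{j=1}^n\delta_{j^2}e(n^{-1/2}j)$ from Proposition~\ref{rotsqrs}. Those are \emph{not} probability measures --- the coefficients $e(n^{-1/2}j)$ are not nonnegative --- whereas the conjecture specifically asks for probability measures. This is not a cosmetic requirement: positivity is precisely what distinguishes the conjecture from the proposition. Indeed, a probability measure must have $\widehat\mu_n(0)=1$, so one can only hope for $\widehat\mu_n(\gamma)\to0$ on $(0,1)$ rather than $[0,1)$, and the whole point of the conjecture is to find a nonnegative variant of the rotated-squares example that survives the constraint $\mu_n\ge0$. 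Your parenthetical ``split each $\mu_{n_k}$ into four nonnegative parts'' does not rescue this: a nonnegative part of a signed or complex measure need not be a probability measure, need not have vanishing Fourier transform on $(0,1)$, and the decomposition in any case changes the operators whose maximal inequality you need to refute. As the paper notes, the $L^2$ pointwise theory (Theorem~\ref{lpresult}) survives without positivity, so the $L^1$ obstruction you are after really does hinge on finding a genuinely positive example, and no such construction is known.

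Two further remarks on the parts of your sketch that do target Proposition~\ref{rotsqrs}. First, the paper's proof of the Fourier decay is not Weyl differencing; it applies the quantitative Weyl bound of Lemma~\ref{weylbound} after observing that the rotation $\gamma\mapsto\gamma+N^{-1/2}$ keeps the effective rational approximation denominator $q_N$ tending to infinity, because $\gamma+N^{-1/2}$ avoids the major arc around $\gamma$ itself. Second, and more importantly, the paper does \emph{not} rerun the Buczolich--Mauldin/LaVictoire construction with the modulation inserted; it sidesteps all of your Steps~2 and~3 via the exact intertwining identity
\[
\mu_n\ast\varphi(k)=e(n^{-1/2}k)\,\bigl(\nu_n\ast(e(-n^{-1/2}\cdot)\varphi)\bigr)(k),\qquad \nu_n=\tfrac1n\sum_{j\le n}\delta_{j^2},
\]
which transfers the known persistent universal $L^1$-badness of $\{\nu_n\}$ from \cite{LaVic2} directly to $\{\mu_n\}$, since a weak-$(1,1)$ bound for $\sup_k|\mu_{n_k}\ast\cdot|$ would give one for $\sup_k|\nu_{n_k}\ast\cdot|$. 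Your Step~3, as you yourself note, is where ``essentially all the difficulty lies'' and is left unresolved; so even judged purely as a sketch of the (easier, already-proved) Proposition~\ref{rotsqrs} this is incomplete, and as a proof of the conjecture it misses the central point, namely producing a positive example.
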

Finally, we examine a special case: the averages along the sequence $a_k:=k^2+\lfloor \rho(k)\rfloor$, where $\rho$ is slowly growing.  Here, a positive result obtained via Corollary \ref{l1result} and a negative result obtained by the methods of \cite{LaVic2} meet at an exact threshold:
\begin{theorem}
\label{threshold}
Let $\rho\in C^2[0,\infty)$, with $\rho(x)\nearrow\infty$, $\rho'(x)\searrow0$ and $\rho''(x)\nearrow0$ as $x\to\infty$, such that for some $\epsilon>0$, $\rho'(x)\lesssim x^{-(\epsilon+2/3)}$ as $x\to\infty$. Consider the sequence of measures
$$\mu_N:=\frac1N\sum_{k=1}^N \delta_{k^2+\lfloor \rho(k)\rfloor}.$$
If $\rho'(x)\gg x^{-1}$ (thus $\rho(x)\gg \log x$), then the $\{\mu_N\}$ have asymptotically trivial transforms, and thus there exists a subsequence $\mu_{N_k}$ such that $\mu_{n_k}f(x)$ converges a.e. for every dynamical system $(X,\F,m,\tau)$ and every $f\in L^1(X)$.\\ \\
If $\rho'(x)\lesssim x^{-1}$ (thus $\rho(x)\lesssim \log x$), then for any subsequence $\{n_k\}$ and any (non-atomic) ergodic dynamical system $(X,\F,m,\tau)$, there exists an $f\in L^1(X)$ such that $\mu_{n_k}f(x)$ diverges on a set of positive measure in $X$.
\end{theorem}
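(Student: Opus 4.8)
The two assertions are independent, and I would prove them separately.

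\emph{Positive direction.} Once $\{\mu_N\}$ is shown to have asymptotically trivial transforms, Corollary \ref{l1result} gives the conclusion, so the only thing to prove is the Fourier statement. By definition of asymptotically trivial transforms, and since $|1-e(\gamma)|$ is bounded above on $[0,1)$ and bounded below on every compact subset of $(0,1)$, this reduces to showing that for each $\eta\in(0,\tfrac12)$,
$$\sup_{\gamma\in[\eta,1-\eta]}\ \Bigl|\tfrac1N\sum_{k=1}^N e\bigl((k^2+\lfloor\rho(k)\rfloor)\gamma\bigr)\Bigr|\ \longrightarrow\ 0 .$$
I would attack this by the circle method, using the level-set decomposition $\mathcal B_m=\{k:\lfloor\rho(k)\rfloor=m\}$. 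These are consecutive blocks of integers; $m$ runs up to $M=\lfloor\rho(N)\rfloor\lesssim N^{1/3-\epsilon}$; their lengths are increasing with the largest $|\mathcal B_M|\asymp 1/\rho'(N)$; and, crucially using $\rho\gg\log$, one checks that $|\mathcal B_{m+1}|/|\mathcal B_m|\to1$. Given $\gamma\in[\eta,1-\eta]$, approximate $\gamma\approx p/q$; on the fixed compact set one automatically has $q\ge2$ and $\|p/q\|\gtrsim_\eta 1$ whenever $q$ is not too large, and this is the gain from restricting to $[\eta,1-\eta]$ that makes the cancellation below available.

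On the major arcs — $q$ below a slowly growing threshold $q_0(N)=o(\min(N\rho'(N),\,N/\rho(N)))$ — the mechanism is arithmetic cancellation coming from $\lfloor\rho\rfloor$: one has the identity
$$\tfrac1N\sum_{k\le N}e\bigl(\tfrac pq(k^2+\lfloor\rho(k)\rfloor)\bigr)=\tfrac1N\sum_{r,s\in\Z/q\Z}e\bigl(\tfrac pq(r^2+s)\bigr)\,\#\{k\le N:\ k\equiv r,\ \lfloor\rho(k)\rfloor\equiv s\ (q)\},$$
and this lattice-point count is uniform up to an error $O((|\mathcal B_M|+\rho(N))/q)$ in each cell (uniformity in $r$ is immediate; uniformity in $s$ is a summation-by-parts using that the block lengths vary slowly), while the uniform part contributes $\tfrac1{q^2}\bigl(\sum_r e(\tfrac pq r^2)\bigr)\bigl(\sum_s e(\tfrac pq s)\bigr)=0$ because $\sum_{s=0}^{q-1}e(\tfrac pq s)=0$ for $q\ge2$. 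Hence $|\hat\mu_N(p/q)|\ll q(|\mathcal B_M|+\rho(N))/N$, which tends to $0$: the term $q|\mathcal B_M|/N\asymp q/(N\rho'(N))$ vanishes because $\rho\gg\log$ forces $N\rho'(N)\to\infty$, and $q_0\rho(N)=o(N)$. For $\gamma=p/q+\beta$ one repeats the argument with the extra factor $e(\beta k^2)$; when $|\beta|$ is small enough that $e(\beta k^2)$ is essentially constant on $[1,N]$ this goes through verbatim, and otherwise one inserts van der Corput's second-derivative estimate to control the cell counts. On the minor arcs ($q_0<q\lesssim N^{5/3}$) there is no arithmetic obstruction: Weyl differencing produces sums $\sum_k e(2\gamma hk+\gamma\Delta_h(k))$ with $\Delta_h(k)=\lfloor\rho(k+h)\rfloor-\lfloor\rho(k)\rfloor$ a step function having $O(\rho(N))$ steps, each such sum is $\ll\min(N,\rho(N)\|2\gamma h\|^{-1})$, and the standard estimate for $\sum_h\min(\cdot,\|2\gamma h\|^{-1})$ yields $|\hat\mu_N(\gamma)|^2\ll N^{-1}+q^{-1}+\rho(N)(\log q)/N+q\rho(N)(\log q)/N^2\to0$. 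The hypothesis $\rho'(x)\lesssim x^{-(\epsilon+2/3)}$, equivalently $\rho(N)\lesssim N^{1/3-\epsilon}$, is precisely what is needed for the major- and minor-arc ranges to overlap.

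The part I expect to be the real obstacle is the transition region: frequencies $\gamma$ close to, but not equal to, a rational $p/q$ of \emph{moderate} denominator — too large for the trivial ``$e(\beta k^2)$ is constant'' argument, but possibly too small for Weyl's inequality to save. Here one must control the interaction of the genuine oscillation $e(\beta k^2)$ with the block structure of $\lfloor\rho(k)\rfloor$ uniformly in $(q,\beta)$. The plan is to organize the continued-fraction convergents of $\gamma$ carefully — a moderate-denominator convergent forces the next convergent into the Weyl-friendly range $[N^{1+\epsilon},N^2]$, which one then uses instead — and to combine the quantitative van der Corput estimate with the slow growth of $\rho$ (through $|\mathcal B_M|=o(N)$). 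This bookkeeping, rather than any single clean inequality, is where the work lies.

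\emph{Negative direction.} When $\rho(x)\lesssim\log x$, i.e.\ $\rho'(x)\le C/x$, the block lengths satisfy $|\mathcal B_{m-1}|=k_m-k_{m-1}\ge k_{m-1}/C$, so $k_m\ge(1+C^{-1})k_{m-1}$; consequently, whichever of $k_M\le N/2$ or $k_M>N/2$ holds, one of the last two blocks contains $\gtrsim N$ of the integers $\le N$. On that block $a_k=k^2+\text{const}$, so $\mu_N$ always contains — with mass bounded below, uniformly in $N$ — a translate of the averaging measure of $\{k^2:k\in I\}$ for some interval $I$ of length $\asymp N$ lying near $N$. Such block-of-squares averages are universally $L^1$-bad by the entropy/counting method of \cite{LaVic2} (in the spirit of Bourgain's entropy argument and of Buczolich--Mauldin for the full square sequence), and the obstruction is robust: transferring to $\Z$ by the Conze principle, for \emph{any} subsequence $\{n_k\}$ the maximal operator $\varphi\mapsto\sup_k|\varphi\ast\mu_{n_k}|$ is strongly unbounded on $\ell^1(\Z)$, since every $\mu_{n_k}$ carries one of these chunks; a Stein--Sawyer / Baire-category argument then yields, in any non-atomic ergodic system, a single $f\in L^1$ with $\sup_k|\mu_{n_k}f|=\infty$ on a set of positive measure, so that $\mu_{n_k}f(x)$ diverges there.
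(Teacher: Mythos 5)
The negative direction is essentially the paper's argument: you identify a block of length $\gtrsim N$ on which $k^2+\lfloor\rho(k)\rfloor$ is a pure translate of the squares, and invoke \cite{LaVic2}. The paper is somewhat more careful at one point you elide: because the translating constant $\lfloor\rho(k)\rfloor$ on the dominant block changes with $N$, one must pass to a further subsequence (by diagonalization) so that this constant converges modulo every squarefree odd $Q$, after which the translated quadratic residues play the role of $\Lambda_Q$ in Theorem 3.1/4.1 of \cite{LaVic2}. Your phrase ``with mass bounded below, uniformly in $N$'' is where this bookkeeping lives; it is necessary to make ``persistently universally $L^1$-bad'' go through, since otherwise the offending block shifts arithmetically as $N\to\infty$.

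The positive direction is where you genuinely diverge from the paper, and where your proposal has an acknowledged gap. Your major-arc mechanism is an exact arithmetic cancellation at $\beta=p/q$: write $\hat\mu_N(p/q)$ as a sum over cells $(r,s)\in(\Z/q\Z)^2$ weighted by the count $\#\{k\le N: k\equiv r,\ \lfloor\rho(k)\rfloor\equiv s\}$, argue this count is nearly uniform, and kill the main term with $\sum_{s}e(ps/q)=0$. The paper takes a different route: using the Weyl-sum machinery of \cite{PETHA}, it approximates the block sum $\sum_{k\in I_j}e(k^2\beta)$ by $\hat\Lambda(p/q)V_j(\beta-p/q)$ uniformly over the full Dirichlet major arc $|\beta-p/q|\le q^{-1}N^{-4/3}$, $q\le N^{2/3}$, which factors out the quadratic Gauss sum and isolates the sum
$\frac1{N^2}\sum_{j\le\lfloor\rho(N)\rfloor}e(j\beta)\sum_{l:\sqrt l\in I_j}e(l\alpha)$,
and then performs a \emph{double} Abel summation in the block index $j$. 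The outer oscillation $e(j\beta)$ over $j\in[0,\lfloor\rho(N)\rfloor]$ produces the factor $|1-e(\beta)|^{-1}$ (this is exactly where the hypothesis $\rho\gg\log$ enters, through $L_{\lfloor\rho(N)\rfloor}/N\asymp(N\rho'(N))^{-1}\to0$), and the second difference of the block endpoints $\lceil\phi(j)\rceil$, $\phi=(\rho^{-1})^2$, is controlled via the monotonicity of $\phi''$, which is where $\rho''\nearrow0$ is used. This is a single estimate that is uniform across the entire major arc including $\alpha\neq0$: there is no separate ``transition region.''

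That transition region is precisely the part you flag as unresolved. Your argument at $\beta=p/q$ needs either $|\beta-p/q|N^2\ll1$ to treat $e((\beta-p/q)k^2)$ as constant, or Weyl saving from the other side, and you concede the matching ``bookkeeping'' is not carried out. This is a real gap, not a routine one: in the paper's formulation this regime is exactly where the $V_j(\alpha)$ factor carries the quadratic oscillation and the Abel summation must run against a genuinely oscillating inner sum, and it is the place where the hypothesis $\rho'(x)\lesssim x^{-(2/3+\epsilon)}$ is invoked (so that $q\le N^{2/3}$ can always be arranged with major arcs wide enough, via Lemma 2.13 of \cite{PETHA}). If you pursue your approach, I would expect the cleanest fix is not continued-fraction bookkeeping but to abandon the exact-cell cancellation and instead reproduce the paper's step: separate the inner sum over each block into the quadratic Gauss sum times a smooth factor $V_j(\alpha)$, so that both $\alpha=0$ and $\alpha\neq0$ are treated in one stroke by summation by parts in $j$. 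Your minor-arc Weyl-differencing computation, with $\Delta_h(k)=\lfloor\rho(k+h)\rfloor-\lfloor\rho(k)\rfloor$ a step function with $O(\rho(N))$ steps, is a reasonable alternative to citing (2.28)--(2.29) of \cite{PETHA} as the paper does, but it must be made quantitative and its admissible $q$-range shown to overlap whatever your final major-arc range is.

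One further small point: you should identify explicitly where $\rho''\nearrow0$ is used in your proposal. In the paper it is the monotonicity of $\phi''$ after the summation by parts; in your proposal it is hidden inside ``the block lengths vary slowly'' (needed both for the uniformity in $s$ of the cell counts and for the claim $|\mathcal B_{m+1}|/|\mathcal B_m|\to1$). Since the paper's remark singles this hypothesis out as essential, your proof should make its role visible rather than implicit.
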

\begin{remark}
This requires an additional monotonicity assumption (the existence of $\lim_{x\to\infty}x\rho'(x)\in[0,\infty]$) in order to become a true dichotomy. Compare \cite{EAS}, where a stricter condition on derivatives (namely, that $\rho$ belongs to a Hardy field) allows Boshernitzan et al. to establish pointwise ergodic theorems in $L^2$ based solely on the rate of growth of such perturbations.  
\end{remark}
\begin{remark}
The condition $\rho'(x)\lesssim x^{-(\epsilon+2/3)}$ is an artifact of the proof rather than a genuine restriction. The requirement that $\rho''\nearrow0$, though, is necessary in some form to establish asymptotically trivial transforms; it is simple otherwise to create examples such that the exponential sums in (\ref{expos}) do not settle down away from 0.
\end{remark}

\section{Positive Result: Asymptotically Trivial Transforms}
The proof of Theorem \ref{weakmax} makes use of the following technique: given the Calder\'on-Zygmund decomposition of a function $f=g+\sum_s b_s$, we classify $s$ as ``small", ``large" or ``intermediate" with respect to each term of our subsequence $\mu_n$.  For $s$ ``large", we can use a covering lemma to handle the terms; for $s$ ``small", we will use cancellation properties of $b_s$; and since for each $s$ there will be only one $n$ for which it counts as ``intermediate", we can handle these terms with a trivial $L^1$ estimate.  This idea plays a role in \cite{STW} as well as other papers.
\\ \\
The proof will also use a technique in singular integral theory, developed by Fefferman \cite{CF} and Christ \cite{MC} and first applied to ergodic theory by Urban and Zienkiewicz \cite{UZ}, which uses a sufficiently powerful $L^2$ estimate to prove a weak $L^1$ estimate.
\\ \\
We may assume that $\|\mu_n\|_{\ell^1(\Z)}\leq1\;\forall n$, and write $\mu_n=\mu'_n+\eta_n$, where $\mu'_n$ is compactly supported and $\sum_{n=1}^\infty \|\eta_n\|_{\ell^1(\Z)}<\infty$.  Then
\begin{eqnarray*}
|\{x:\sup_n |\varphi\ast\eta_n(x)|>\lambda\}|&\leq& \lambda^{-1}\sum_{n=1}^\infty \|\varphi\ast\eta_n\|_{\ell^1(\Z)}\\
&\leq&\lambda^{-1}\left(\sum_{n=1}^\infty \|\eta_n\|_{\ell^1(\Z)}\right)\|\varphi\|_{\ell^1(\Z)}.
\end{eqnarray*}
Now $|\mu'_n(\gamma)|\leq |\hat\mu_n(\gamma)|+2\|\eta_n\|_1$, and so $\hat\mu'_n$ converges to 0 uniformly on every compact subset of $(0,1)$.  Thus we may assume that $\mu_n$ is compactly supported for each $n$.
\\ \\
Furthermore, if the union of these supports were compact, then it is easy to see (by Parseval's Theorem) that $\|\mu_n\|_{\ell^1(\Z)}\to0$ and we may choose a subsequence such that $\sum_k\|\mu_{n_k}\|_{\ell^1(\Z)}<\infty$; such a subsequence would trivially satisfy a weak maximal inequality.\\ \\
We may therefore assume that the union of the supports of the $\mu_n$ is unbounded, and accordingly set $S(n):=\min\{s\geq0: \text{ supp }\mu_m\subset[-2^s,2^s]\;\forall m\leq n\}$, and $N(s):=\min\{n: S(n)>s\}$.
\\
\\ Since we will want the cancellation properties of $\mu_{n+1}$ to overcome the size of the support of $\mu_n$, we choose an increasing subsequence $\{n_k\}$ such that $$\sup_{\gamma\in[0,1)}|(1-e(\gamma))\hat\mu_{n_{k}}(\gamma)|\leq 2^{-2S(n_{k-1})-2k}$$ and such that $S(n_k)$ is strictly increasing.  By passing to this subsequence, we may without loss of generality assume that $\mu_n$ has the following properties in the first place:
\begin{eqnarray}
&\text{supp }\mu_n \subset [-2^{S(n)},2^{S(n)}]& \\
\label{decaycdn}
&\sup_{\gamma\in[0,1)}|(1-e(\gamma))\hat\mu_n(\gamma)| \leq 2^{-2S(n-1)-2n}.&
\end{eqnarray}
Now, given $\varphi\in \ell^1$ and $\lambda>0$, we perform the discrete Calder\'on-Zygmund decomposition: we obtain a collection $\B$ of dyadic discrete intervals $Q_{s,k}$, and a decomposition $\varphi=g+\sum_{(s,k)\in\B}b_{s,k}$ with $\|g\|_\infty\leq \lambda$, such that for all $(s,k)\in\B$,
\begin{eqnarray*}
\supp b_{s,k} &\subset& Q_{s,k}\\
\sum_x b_{s,k}(x)&=&0\\
\sum_x |b_{s,k}(x)|&\leq& \lambda |Q_{s,k}|=2^s\lambda
\end{eqnarray*}
and such that
\begin{eqnarray*}
\sum_{(s,k)\in\B} |Q_{s,k}|&\leq& \lambda^{-1}\|\varphi\|_1.
\end{eqnarray*}
Let $b_s:=\sum_k b_{s,k}$ for each $s$, and let $Q_{s,k}^\star$ denote the interval with the same center as $Q_{s,k}$ and 3 times the length.  Then
\begin{eqnarray*}
|\{x:\sup_n |\mu_n\ast \varphi(x)|>3\lambda\}|&\leq &|\{x: \sup_n |\mu_n\ast g(x)|>\lambda\}|+|\{x:\sup_n |\mu_n\ast b(x)|>2\lambda\}|\\
&\leq& 0+\sum_{s,k}|Q_{s,k}^\star|+|\{x\not\in \bigcup_{s,k} Q_{s,k}^\star :\sup_n |\mu_n\ast b(x)|>2\lambda\}|\\
&\leq&  \frac C\lambda\|\varphi\|_1+|\{x: \sup_n |\mu_n\ast \sum_{s< S(n)}b_s(x)|>2\lambda\}|,
\end{eqnarray*}
because $\|\mu_n\ast g\|_\infty\leq \|\mu_n\|_1\|g\|_\infty\leq\lambda$ and because $s\geq S(n)\implies \text{ supp } \mu_n\ast b_{s,k} \subset Q_{s,k}^\star$.
\\ \\Now $S(n-1)\leq s <S(n)\implies n=N(s)$, and therefore we decompose
\begin{eqnarray*}
\sup_n |\mu_n\ast \sum_{s< S(n)}b_s(x)| &\leq & \sum_n |\mu_n\ast \sum_{s=S(n-1)}^{S(n)-1}b_s(x)| + \sup_n |\mu_n\ast \sum_{s< S(n-1)}b_s(x)| \\
&\leq & \sum_s |\mu_{N(s)}\ast b_s(x)| + \sup_n |\mu_n\ast \sum_{s< S(n-1)}b_s(x)|.
\end{eqnarray*}
As mentioned earlier, we can trivially bound the contribution from the ``intermediate'' terms:
\begin{eqnarray*}
|\{x: \sum_s |\mu_{N(s)}\ast b_s(x)|>\lambda\}|&\leq& \lambda^{-1} \sum_s \|b_s\ast \mu_{N(s)}\|_1\\
&\leq& \lambda^{-1} \sum_s \|b_s\|_1 \|\mu_{N(s)}\|_1\leq \frac C\lambda\|\varphi\|_1.
\end{eqnarray*}
We have thus reduced this problem to the following claim:

\begin{lemma}
\begin{eqnarray}
\label{lastset}
|\{x: \sup_n |\mu_n\ast \sum_{s< S(n-1)}b_s(x)|>\lambda\}|\leq\frac C\lambda\|\varphi\|_1.
\end{eqnarray}
\end{lemma}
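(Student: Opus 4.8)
Write $F_n:=\sum_{s<S(n-1)}b_s$; we must bound the weak-$(1,1)$ norm of $\varphi\mapsto\sup_n|\mu_n\ast F_n|$. The plan is to linearize, to extract from $(\ref{decaycdn})$ a strong $L^2$ smoothing estimate for the pieces $\mu_n\ast b_{s,k}$, and then to combine it with the trivial $L^1$ bound through the Fefferman--Christ technique of \cite{CF,MC} (used in an ergodic setting in \cite{UZ}).

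First, linearize: fix a measurable selection $n(\cdot)$, set $E:=\{x:|\mu_{n(x)}\ast F_{n(x)}(x)|>\lambda\}$, and note that it suffices to prove $|E|\lesssim\lambda^{-1}\|\varphi\|_{\ell^1}$ uniformly in $n(\cdot)$. Two estimates on the building blocks $\mu_n\ast b_{s,k}$ (with $s<S(n-1)$) will be used. The trivial one is $\|\mu_n\ast b_{s,k}\|_{\ell^1}\le\|b_{s,k}\|_{\ell^1}\le\lambda 2^s$. For the smoothing one, note that $b_{s,k}$ has vanishing sum and lives on an interval of length $2^s$, so $|\widehat{b_{s,k}}(\gamma)|\lesssim\min(1,2^s|1-e(\gamma)|)\|b_{s,k}\|_{\ell^1}$, while $(\ref{decaycdn})$ says $|\widehat{\mu_n}(\gamma)|\le\min\!\big(1,\,2^{-2S(n-1)-2n}|1-e(\gamma)|^{-1}\big)$; multiplying, using $\int_0^1\min(1,2^s\delta)^2\min(1,\varepsilon\delta^{-1})^2\,d\gamma\lesssim\varepsilon^2 2^s$ for $\delta:=|1-e(\gamma)|$ and $\varepsilon\le2^{-s}$, and summing in $k$, we obtain
$$\|\mu_n\ast b_s\|_{\ell^2}\ \lesssim\ 2^{-\frac32 S(n-1)-2n}\,\lambda\,A_s,\qquad A_s:=\sum_k|Q_{s,k}|.$$
(Via $|1-e(h\gamma)|\le|h|\,|1-e(\gamma)|$ for $h\in\Z$ this is the statement that the increments of $\mu_n$ over distances $\le2^s<2^{S(n-1)}$ are tiny in $\ell^2$ --- precisely the cancellation $(\ref{decaycdn})$ was engineered to give, dovetailing with the vanishing sum of $b_s$.)

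Used by itself in Chebyshev's inequality, this $L^2$ estimate only gives $|E|\lesssim\lambda^{-2}\|\varphi\|_{\ell^1}^2$, which is $(\ref{lastset})$ only when $\|\varphi\|_{\ell^1}\le\lambda$: one has controlled the square root of a fibre $\{x\in E:n(x)=n\}$ where one needs the fibre itself. To recover the correct power one runs the Fefferman--Christ scheme. Decompose $b_{s,k}=\sum_{i\ge0}b_{s,k}^i$ according to the dyadic size of $|b_{s,k}|$, so that $b_{s,k}^i$ has vanishing sum on $Q_{s,k}$, $\|b_{s,k}^i\|_{\ell^\infty}\lesssim2^i\lambda$, $\|b_{s,k}^i\|_{\ell^1}\lesssim\lambda2^s$, and --- up to its mean-zero correction --- lives on a subset of $Q_{s,k}$ of cardinality $\lesssim2^{-i}2^s$; hence $b_s^i:=\sum_k b_{s,k}^i$ obeys $\|b_s^i\|_{\ell^2}^2\lesssim2^i\lambda^2 A_s$ in addition to the Fourier bound above. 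Split the threshold $\lambda$ dyadically over the level index $i$ and the scale gap $m:=S(n-1)-s\ge1$, bounding $|E|$ by $\sum_{i,m}$ of the measures of sets where a partial sum exceeds $\sim2^{-i-m}\lambda$. For each $(i,m)$ the $L^2$ estimate bounds that partial sum in $\ell^2$; Chebyshev turns this into a measure bound involving the denominator $\sum_{s,k}|Q_{s,k}|\le\lambda^{-1}\|\varphi\|_{\ell^1}$, and the small supports of the high-$i$ pieces keep the attendant exceptional sets --- roughly, where a maximal function of $\sum_k\mathbf 1_{\supp b_{s,k}^i}$ is large --- summably small. The polynomial losses, $2^{O(i)}$ from Chebyshev at height $\sim2^{-i}\lambda$ against an $\ell^2$ norm inflated by $2^{i/2}$ and $2^{O(m)}$ from counting overlaps of the fattened cubes, are all absorbed by the decay $2^{-2S(n-1)-2n}=2^{-2(s+m)-2n}$ of $(\ref{decaycdn})$ --- or, if more room is wanted, by passing at the start to a subsequence with faster decay, which is free because $\sup_\gamma|(1-e(\gamma))\widehat{\mu_n}(\gamma)|\to0$.

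Summing the (convergent) series over $i$, $m$, $n$, $k$ yields $|E|\lesssim\lambda^{-1}\|\varphi\|_{\ell^1}$, i.e.\ $(\ref{lastset})$. The crux I expect is the Fefferman--Christ bookkeeping: getting the triple series to converge with the right power of $\lambda$, the subtlest point being that $\supp\mu_n$ has radius $2^{S(n)}$, which may far exceed the scale $2^{S(n-1)}$ against which $(\ref{decaycdn})$ measures decay. It is the restriction $s<S(n-1)$ --- so $\mu_n$'s cancellation is matched only against scales it can beat, its own oversized support being accounted for by the occurrence of $S(n)$ in $(\ref{decaycdn})$ one index later --- together with the level-set splitting, that makes everything close. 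The linearization, the two building-block estimates, and the arithmetic of $S(n)$, $N(s)$ are routine by comparison.
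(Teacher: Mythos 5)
Your plan identifies the right ingredients---the Fourier decay from (\ref{decaycdn}), the mean-zero cancellation of the $b_{s,k}$, and the Fefferman--Christ philosophy of trading a strong $L^2$ bound for a weak $L^1$ bound---and you correctly diagnose why a naive Chebyshev-at-$L^2$ argument gives only $\lambda^{-2}\|\varphi\|_1^2$. But the fix you propose, a full level-set decomposition $b_{s,k}=\sum_i b_{s,k}^i$ followed by a triple dyadic summation over $i$, $m$, $n$, is both substantially heavier than what is needed and left unverified: you yourself flag ``the Fefferman--Christ bookkeeping'' as ``the crux I expect,'' which means the crucial quantitative step is not actually carried out. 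The paper's proof avoids the level-set machinery entirely.

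Two things differ concretely. First, rather than folding the cancellation of $b_s$ into a single Fourier integral $\int|\hat\mu_n|^2|\hat b_{s,k}|^2$ and then summing in $k$ by the triangle inequality (which costs a full factor of $A_s=\sum_k|Q_{s,k}|$ rather than $A_s^{1/2}$, and is precisely where your scheme loses a power of $\lambda$), the paper splits $\mu_n=\mu_n\ast\sigma_n+\mu_n\ast(\delta_0-\sigma_n)$ with $\sigma_n=2^{-S(n-1)-n}\chi_{[1,2^{S(n-1)+n}]}$. The $\sigma_n$ piece is handled by a pure $\ell^1$ argument exploiting the mean-zero property of $b_{s,k}$ in physical space (so the $k$-sum is harmless at $L^1$), while the complementary piece admits the uniform bound $\|\hat\mu_n(1-\hat\sigma_n)\|_\infty\leq 2^{-S(n-1)-n}$, after which one can apply Plancherel and the orthogonality of the $b_s$ across $s$ without needing $\hat b_s$ to be small near $0$. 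Second---and this is the step you are missing---the power of $\lambda$ is repaired not by decomposing $b_{s,k}$ further, but by the one-line discrete observation $\|b_s\|_2^2\leq\|b_s\|_\infty\|b_s\|_1\leq\lambda 2^s\|b_s\|_1$ (on $\Z$, $\|b_{s,k}\|_\infty\leq\|b_{s,k}\|_1\leq\lambda 2^s$, and the $b_{s,k}$ with fixed $s$ have disjoint supports). Plugging this in after Chebyshev immediately gives $|E_2|\lesssim\lambda^{-1}\sum_n 2^{-S(n-1)-2n}\|\varphi\|_1\lesssim\lambda^{-1}\|\varphi\|_1$. This inequality is exactly what the level-set decomposition would be trying to extract in the discrete setting, so your route is in spirit a long way around the same corner; without writing out the triple-series estimates, though, your argument has a genuine gap exactly where you flag one.
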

\begin{proof}
We will be able to use (\ref{decaycdn}) to our advantage here, since each $b_{s,k}$ has mean 0 when averaged over dyadic intervals of size $2^{S(n-1)}$, and the Fourier bounds on $\mu_n$ are strong enough to exploit this.
\\ \\We consider the standard $\ell^1$ averages 
\begin{eqnarray}
\sigma_n=2^{-S(n-1)-n} \chi_{[1,2^{S(n-1)+n}]},
\end{eqnarray}
and decompose $\mu_n =\mu_n\ast\sigma_n +\mu_n\ast(\delta_0-\sigma_n )$.  Accordingly, the set on the left of (\ref{lastset}) is contained in the union of the sets
\begin{eqnarray*}
E_1&:=&\{x: \sup_n |(\mu_n\ast\sigma_n)\ast \sum_{s< S(n-1)}b_s(x)|>\frac\lambda2\},\\
E_2&:=&\{x: \sup_n |(\mu_n-\mu_n\ast\sigma_n)\ast \sum_{s< S(n-1)}b_s(x)|>\frac\lambda2\}.
\end{eqnarray*}
Observe that for any $t>s$,
\begin{eqnarray*}
|\chi_{[1,2^t]}\ast b_{s,k}(x)|\leq\left\{\begin{array}{ll} 0, & x\not\in Q_{s,k}+[0,2^t] \\
0,& x\in Q_{s,k}+[2^s,2^t-2^s]\\
\|b_{s,k}\|_1 & \text{otherwise,}\end{array}\right.
\end{eqnarray*}
since each $b_{s,k}$ has mean 0 and is supported on $Q_{s,k}$.  Therefore $\|\sigma_n\ast b_{s,k}\|_1\leq 2^{-S(n-1)-n+s+1}\|b_{s,k}\|_1$, which implies
\begin{eqnarray*}
|E_1| &\leq & 2\lambda^{-1}\sum_n \|\mu_n\ast\sigma_n\ast \sum_{s< S(n-1)}b_s\|_1\\
&\leq&2\lambda^{-1}\sum_n\|\mu_n\|_1\sum_{s<S(n-1)} \|\sigma_n\ast b_s\|_1\\
&\leq&2\lambda^{-1}\sum_n \sum_{s<S(n-1)} 2^{-S(n-1)-n+s+1}\|b_s\|_1\\
&\leq& 2\lambda^{-1}\sum_n \sum_{s} 2^{-n+1}\|b_s\|_1\leq \frac C\lambda \|\varphi\|_1.
\end{eqnarray*}
(This is a standard Calder\'on-Zygmund argument so far.)  Now for the other sum, we can write
\begin{eqnarray*}
1-\hat\sigma_n(\gamma)=(1-e(\gamma))\displaystyle\sum_{j=0}^{2^{S(n-1)+n}-1} (1-j2^{-S(n-1)-n})e(j\gamma),
\end{eqnarray*}
and use (\ref{decaycdn}) to bound
\begin{eqnarray*}
\|\hat\mu_n(1-\hat\sigma_n)\|_\infty &\leq & 2^{S(n-1)+n}\sup_\gamma|(1-e(\gamma))\hat\mu_n(\gamma)| \leq 2^{-S(n-1)-n}.
\end{eqnarray*}
Here, in a variant of the technique from \cite{MC}, we will use the extremely strong $\ell^2$ estimate we get from this Fourier bound to obtain a weak $\ell^1$ estimate. Starting with Chebyshev's inequality,
\begin{eqnarray*}
|E_2| &\leq& 4\lambda^{-2}\left\|\sup_n \left|(\mu_n-\mu_n\ast\sigma_n)\ast \sum_{s< S(n-1)}b_s\right|\right\|_2^2\\
&\leq& 4\lambda^{-2} \sum_n \left\|(\mu_n-\mu_n\ast\sigma_n)\ast \sum_{s< S(n-1)}b_s\right\|_2^2\\
&=& 4\lambda^{-2} \sum_n \left\|\hat\mu_n(1-\hat\sigma_n) \sum_{s< S(n-1)}\hat b_s\right\|_2^2\\
&\leq& 4\lambda^{-2} \sum_n \|\hat\mu_n(1-\hat\sigma_n)\|_\infty^2\left\|\sum_{s< S(n-1)}\hat b_s\right\|_2^2\\
&\leq& 4\lambda^{-2} \sum_n 2^{-2S(n-1)-2n}\left\|\sum_{s< S(n-1)} b_s\right\|_2^2\\
&=& 4\lambda^{-2} \sum_n 2^{-2S(n-1)-2n} \sum_{s< S(n-1)}\|b_s\|_2^2
\end{eqnarray*}
using the orthogonality of $b_s$ for different $s$ for the last step.  Now since $\|b_s\|_\infty\leq\|b_s\|_1\leq\lambda 2^s$,
\begin{eqnarray*}
&\leq& 4\lambda^{-2} \sum_n 2^{-2S(n-1)-2n} \sum_{s< S(n-1)}\lambda 2^s\|b_s\|_1\\
&\leq& 4\lambda^{-1} \sum_n 2^{-S(n-1)-2n}\sum_s \|b_s\|_1\leq \frac C\lambda\|\varphi\|_1.
\end{eqnarray*}
This completes the proof.
\end{proof}

\begin{remark}
This proof generalizes straightforwardly to measure-preserving $\Z^d$-actions, and indeed, to actions by finitely generated abelian groups (this requires defining the Calder\'on-Zygmund decomposition on such a group, using for instance the dyadic cubes from \cite{MC2}).  Note that the proof of Theorem \ref{lpresult} for $p=2$ generalizes to this case, and thus we will have a.e. convergence of these ergodic averages for all $f\in L^1(X)$; see Theorem 2.4 in \cite{BJR}.
\end{remark}

\section{Negative Result: Rotated Averages on $n^2$}

In this section, we will prove Proposition \ref{rotsqrs}.  We start by adapting a definition from \cite{PETHA} that will be useful to us.
\begin{definition}
We say that a sequence of measures $\{\mu_n\}$ is \emph{persistently universally $L^p$-bad} if for every (non-atomic) ergodic dynamical system $(X,\F,\mu,\tau)$ and every infinite $\S\subset\N$, there exists an $f\in L^p(X,\mu)$ such that the sequence $\{\mu_n f(x)\}_{N\in\S}$ ($N$ taken in increasing order) diverges on a set of positive measure in $X$.
\end{definition}
Again, by the Conze principle \cite{Conze}, it can be shown that $\{\mu_n\}$ is persistently universally $L^p$-bad if and only if there is no subsequence $\{n_k\}$ such that the weak (p,p) maximal inequality
\begin{eqnarray*}
|\{x:\sup_k |\varphi\ast\mu_{n_k}(x)|>\lambda\}|\leq C\lambda^{-p}\|\varphi\|_{\ell^p(\Z)}^p\;\forall\varphi\in\ell^p(\Z)
\end{eqnarray*}
holds for some $C<\infty$.
\\ \\In \cite{LaVic2}, the averages along the squares $\nu_n:=\frac1n\sum_{k=1}^n \delta_{k^2}$ are proved to be persistently universally $L^1$-bad (they were proved universally $L^1$-bad earlier by Buczolich and Mauldin \cite{DAAS}).  While $\hat\mu_n$ does not converge to 0 everywhere, the behavior of these exponential sums is very regular; we will exploit this.\\
\\
Let us now consider the measures $\mu_n:=\displaystyle\frac1n\sum_{j=1}^n \delta_{j^2}e(n^{-1/2}j)$. \\ \\
Because $\mu_n\ast \varphi(k)=e(n^{-1/2}k)(\nu_n\ast (e(-n^{-1/2} \cdot)\varphi)(k))$, a weak maximal inequality that fails for $\{\nu_{n_k}\}$ will also fail for $\{\mu_{n_k}\}$, so that $\{\mu_n\}$ must be persistently universally $L^1$-bad.  It thus remains to show that $\hat \nu(\gamma)\to0$ for all $\gamma\in[0,1)$.
\\ \\
In the language of the Hardy-Littlewood circle method, this happens because each rational number never enters the ``major arc'' corresponding to itself, and because the peaks that pass over any point grow smaller as the denominator they correspond to grows larger.  But we shall be a little more rigorous than this.
\\ \\In this section and the next, we will use the classical result of Weyl \cite{Weyl} on trigonometric sums, and we will repeatedly refer to its exposition in Section II.2 of Rosenblatt and Wierdl \cite{PETHA} rather than replicate it in its entirety here\footnote{Many theorem and equation numbers in this paper will refer to \cite{PETHA}, rather than to Section 2 of this paper. Fortunately, none of the theorem or equation numbers will coincide between our Section 2 and their Section II.2.}.  In this section, we will require only the estimate from (2.24) of that paper:
\begin{lemma}
\label{weylbound}
Let $\beta\in[0,1)$ and $N\in\Z^+$, and let $p/q\in\Q$ with $(p,q)=1$, such that $q\leq N^{4/3}$ and $|\beta-p/q|\leq q^{-1}N^{-4/3}$.  Then
\begin{eqnarray}
\left| \frac1N \sum_{j=1}^N e(j^2\beta)\right|\leq C\left(\frac1{\sqrt q}+\frac{\sqrt {\log N}}{N^{1/3}}\right).
\end{eqnarray}
\end{lemma}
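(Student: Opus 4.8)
The plan is to obtain this as a special case of Weyl's inequality for quadratic exponential sums, following the exposition in Section II.2 of \cite{PETHA}. Write $W = \sum_{j=1}^N e(j^2\beta)$. The first step is Weyl differencing: squaring and reindexing by $h = j - j'$ (so that $j^2 - j'^2 = 2hj - h^2$),
$$|W|^2 = \sum_{j,j'=1}^N e\big((j^2 - j'^2)\beta\big) = \sum_{|h| < N} e(-h^2\beta) \sum_{j \in I_h} e(2hj\beta),$$
where $I_h = \{j : 1 \le j \le N,\ 1 \le j-h \le N\}$ has at most $N$ elements. Each inner sum is a geometric progression, so $\big|\sum_{j \in I_h} e(2hj\beta)\big| \le \min\!\big(N, (2\|2h\beta\|)^{-1}\big)$, where $\|t\|$ denotes the distance from $t$ to $\Z$; using $\|{-t}\| = \|t\|$ and pairing $h$ with $-h$, this gives
$$|W|^2 \le N + 2\sum_{h=1}^{N-1} \min\!\left(N, \frac{1}{2\|2h\beta\|}\right).$$

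The second step uses the Diophantine hypothesis to replace $\|2h\beta\|$ by $\|2hp/q\|$. Since $|\beta - p/q| \le q^{-1}N^{-4/3}$, for $1 \le h < N$ we have $|2h\beta - 2hp/q| \le 2Nq^{-1}N^{-4/3} = 2q^{-1}N^{-1/3}$, which is smaller than $\tfrac{1}{2q}$ once $N > 64$ (the finitely many smaller $N$ are absorbed into $C$). Hence, whenever $2hp \not\equiv 0 \pmod q$ we have $\|2hp/q\| \ge 1/q$ and therefore $\|2h\beta\| \ge \tfrac12\|2hp/q\|$, while the $h$ with $2hp \equiv 0 \pmod q$ are simply estimated by $\min(N,\cdot) \le N$. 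Now partition $\{1, \dots, N-1\}$ into $O(N/q + 1)$ blocks of $q$ consecutive integers. On each block, as $h$ runs over $q$ consecutive values the residue $2hp \bmod q$ ranges over the subgroup of $\Z/q\Z$ generated by $2p$, namely $d\Z/q\Z$ with $d := \gcd(2p,q) \le 2$, hitting each of its $q/d$ elements exactly $d$ times; so a single block contributes at most $dN + d\sum_{r}\|r/q\|^{-1} \lesssim N + q\log q$, the sum being over the nonzero multiples of $d$ below $q$. Summing over blocks,
$$\sum_{h=1}^{N-1} \min\!\left(N, \frac{1}{2\|2h\beta\|}\right) \lesssim \left(\frac Nq + 1\right)\big(N + q\log q\big) \lesssim \frac{N^2}{q} + N\log q + q\log q.$$

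Combining the two displays gives $|W|^2 \lesssim N + \tfrac{N^2}{q} + N\log q + q\log q$. Dividing by $N^2$ and invoking $q \le N^{4/3}$ — which is exactly what makes $q\log q \le N^{4/3}\log N$, so that $N^{-2}q\log q \lesssim N^{-2/3}\log N$ dominates the $N\log q$ and $N$ terms — we get $N^{-2}|W|^2 \lesssim q^{-1} + N^{-2/3}\log N$, and taking square roots (using $\sqrt{a+b} \le \sqrt a + \sqrt b$),
$$\frac1N\left|\sum_{j=1}^N e(j^2\beta)\right| \lesssim \frac{1}{\sqrt q} + \frac{\sqrt{\log N}}{N^{1/3}},$$
which is the asserted estimate. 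The step I expect to be the main obstacle is the block-by-block bound in the second paragraph: one has to track carefully how many $h$ in each block make $2hp$ divisible by $q$ (this is where the factor $d = \gcd(2p,q)$ and the case of even $q$ enter) and, crucially, to check that the perturbation $2h\beta - 2hp/q$ never pushes $\|2h\beta\|$ below a fixed multiple of $\|2hp/q\|$ except in the harmless case $2hp \equiv 0 \pmod q$. Getting these constants right is what produces the exponent $N^{1/3}$ rather than a weaker power of $N$, and since this is precisely the computation carried out around (2.24) of \cite{PETHA}, in the paper I would simply cite that reference rather than reproduce the details.
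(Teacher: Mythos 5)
Your argument is correct, and it is exactly the standard derivation of Weyl's inequality for quadratic sums, so there is no mathematical daylight between it and what is actually true; but it is worth noting that the paper itself does not prove this lemma at all — it simply quotes inequality (2.24) from Rosenblatt and Wierdl \cite{PETHA}, which is the packaged form of precisely the computation you carry out. In other words, you have reconstructed the proof that the cited reference contains, rather than matching a proof in the paper. Your reconstruction is sound: the Weyl differencing identity $j^2 - j'^2 = 2hj - h^2$, the geometric-series bound $\min(N, (2\|2h\beta\|)^{-1})$, the passage from $\|2h\beta\|$ to $\|2hp/q\|$ via $2h|\beta - p/q| \le 2q^{-1}N^{-1/3} < (2q)^{-1}$ for $N>64$, and the block-by-block sum in which $2hp \bmod q$ covers the subgroup $d\Z/q\Z$ with $d=\gcd(2p,q)\in\{1,2\}$ hitting each residue $d$ times — all of this is right, and the final bookkeeping $N^{-2}(N^2/q + N\log q + q\log q + N) \lesssim q^{-1} + N^{-2/3}\log N$ does use $q\le N^{4/3}$ in exactly the place you say. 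The one cosmetic remark is that you correctly anticipated, in your closing sentence, that the paper would cite rather than reprove; that is indeed what happens, and a referee of the paper would accept the citation, so your more detailed version is supererogatory but not wrong.
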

Note that for any $\beta$ and $N$ there exists such a $p/q$, by Dirichlet's theorem on rational approximations.  Now of course $\mu_n(\gamma)=\nu_n(\gamma+n^{-1/2})$.  Fix $\gamma\in[0,1)$, and for each $N$ let $q_N$ denote the smallest denominator such that for some integer $p_N$, $q_N\leq N^{4/3}$ and $|\gamma+N^{-1/2}-p_N/q_N|\leq q_N^{-1}N^{-4/3}$.
\\ \\ For each $a/b\in\Q$, if $\gamma\neq a/b$, then eventually $|\gamma -a/b|\geq N^{-1/2}+b^{-1}N^{-4/3}$ so that $q_N\neq b$ for $N$ sufficiently large. And if $\gamma=a/b$, then $|\gamma+N^{-1/2} -a/b|=N^{-1/2}\geq b^{-1}N^{-4/3}$, so that $q_N\neq b$.
\\ \\This implies that $q_N\to\infty$ as $N\to\infty$, and by Lemma \ref{weylbound}, we see that $\hat\nu(\gamma)\to0$ as $N\to\infty$.  This suffices to prove Proposition \ref{rotsqrs}.

\section{Threshold Result: Averages Along $n^2+\lfloor \rho(n)\rfloor$}
In this section, we will prove Theorem \ref{threshold}.
\\ \\We begin with the first claim, that if $\rho'(x)\gg x^{-1}$, then the $\{\mu_N\}$ have asymptotically trivial transforms.  As noted above, we will make extensive use of Section II.2 of \cite{PETHA} in this section.
\\ \\ Let $\beta\in\T$.  Again, by Dirichlet's theorem on rational approximations, there exists a rational number $p/q$ in lowest terms, with $q\leq N^{4/3}$, such that
\begin{eqnarray}
\label{dirichlet}
|\beta-p/q|\leq q^{-1}N^{-4/3}.
\end{eqnarray}
We first write
$$\hat\mu_N(\beta)= \frac1N\sum_{j=0}^{\lfloor \rho(N) \rfloor} e(j\beta)\sum_{k\in I_j}e(k^2\beta) + O\left(\frac{L_{\lfloor\rho(N)\rfloor}}N\right),$$
where $I_j:=\{x\in\R^+: \lfloor\rho(k)\rfloor=j\}$, and denote $L_j:=|I_j|$.  Note that by the hypotheses on $\rho$, we have $L_j$ increasing and $j^{\epsilon+2/3}\lesssim L_{\lfloor\rho(j)\rfloor} \ll j$ as $j\to\infty$.
\\ \\We adapt from \cite{PETHA} the notation
$$\hat\Lambda(p/q):= \frac1q\sum_{n=0}^{q-1} e(n^2p/q)$$
for $p/q\in\Q$, as well as the functions
$$V_j(\alpha):=\sum_{l:\sqrt l\in I_j}\frac1{2\sqrt l}e(l\alpha)$$
for any $\alpha\in\R$.
\\ \\As with the estimate (2.24) there, we will make use of different estimates depending on the size of the denominator $q$ relative to $N$.
\begin{lemma}
There exists a constant $C<\infty$, depending only on $\rho$ and $\epsilon$, such that for $j>\rho(N^{1-\epsilon})$, $q\leq N^{2/3}$, and $|\beta-p/q|\leq q^{-1}N^{-4/3}$, we have
\begin{eqnarray}
\label{qsmall}
\left| \sum_{k\in I_j}e(k^2\beta) - \hat\Lambda(p/q)V_j(\beta-p/q)\right| \leq CN^{-\epsilon/6}L_j.
\end{eqnarray}
Similarly, if  $j>\rho(N^{1-\epsilon})$, $N^{2/3}<q\leq N^{4/3}$, and $|\beta-p/q|\leq q^{-1}N^{-4/3}$, then
\begin{eqnarray}
\label{qlarge}
\left| \sum_{k\in I_j}e(k^2\beta)\right| \leq CN^{-\epsilon/7}L_j.
\end{eqnarray}
\end{lemma}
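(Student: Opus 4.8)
The plan is to read (\ref{qsmall}) and (\ref{qlarge}) as the ``major arc'' and ``minor arc'' halves of a circle-method analysis, transplanted from the interval $[1,N]$ --- where it is carried out in Section~II.2 of \cite{PETHA} --- to the block $I_j$. The one geometric input is that the blocks under consideration are \emph{long}: if $j>\rho(N^{1-\epsilon})$ and $k\in I_j$, then $\lfloor\rho(k)\rfloor=j>\rho(N^{1-\epsilon})$ forces $k>N^{1-\epsilon}$, hence $L_j=L_{\lfloor\rho(k)\rfloor}\gtrsim k^{2/3+\epsilon}\gtrsim N^{(1-\epsilon)(2/3+\epsilon)}$ by the running bounds on $L_j$; one also has $L_j\ll N$. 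Thus $L_j$ lies a fixed positive power of $N$ above $N^{2/3}$. I will moreover assume at the outset that $\epsilon$ is as small as convenient, since shrinking $\epsilon$ only weakens the hypothesis on $\rho'$ and its value is immaterial in the application to Theorem~\ref{threshold}; this is why the exponents $\epsilon/6$ and $\epsilon/7$ need only be checked in the small-$\epsilon$ range, where there is room to spare.

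For the small-denominator estimate (\ref{qsmall}): write $\beta=p/q+\theta$ with $|\theta|\le q^{-1}N^{-4/3}$, split $k\in I_j$ into residue classes mod $q$, and use $e(k^2 p/q)=e(r^2 p/q)$ when $k\equiv r\pmod q$ to obtain $\sum_{k\in I_j}e(k^2\beta)=\sum_{r=0}^{q-1}e(r^2 p/q)\sum_{m\,:\,qm+r\in I_j}e((qm+r)^2\theta)$. Because $q\le N^{2/3}$ while $L_j\gtrsim N^{2/3+\epsilon/3-\epsilon^2}$, each inner sum runs over a long interval in $m$, and Euler--Maclaurin replaces it by $\frac1q\int_{I_j}e(t^2\theta)\,dt$ up to an error $\lesssim 1+|\theta|\int_{I_j}t\,dt\lesssim 1+|\theta|NL_j$; summing on $r$ and recognizing the Gauss sum $\frac1q\sum_{r=0}^{q-1}e(r^2 p/q)=\hat\Lambda(p/q)$ then gives $\hat\Lambda(p/q)\int_{I_j}e(t^2\theta)\,dt$ with total error $\lesssim q+q|\theta|NL_j\le q+N^{-1/3}L_j$. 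Substituting $v=t^2$ rewrites the integral as $\int_{\{v\,:\,\sqrt v\in I_j\}}\frac{e(v\theta)}{2\sqrt v}\,dv$, of which $V_j(\theta)$ is the integer Riemann sum; since $|\theta|\le N^{-4/3}$ and $v\gtrsim N^{2(1-\epsilon)}$ on that set while its length is $\lesssim NL_j$, the Riemann-sum error is $\lesssim L_jN^{-1}$. Using $|\hat\Lambda(p/q)|\le1$, estimate (\ref{qsmall}) then follows from $q\le N^{2/3}\lesssim N^{-\epsilon/6}L_j$ and $N^{-1/3}L_j\le N^{-\epsilon/6}L_j$.

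For the large-denominator estimate (\ref{qlarge}): since $q\le N^{4/3}$ we have $|\beta-p/q|\le q^{-1}N^{-4/3}\le q^{-2}$, so $p/q$ is an admissible Dirichlet approximant for Weyl's inequality (\cite{Weyl}; or the exposition in Section~II.2 of \cite{PETHA} underlying Lemma~\ref{weylbound}), which applied to the sum over the interval $I_j$ gives $\big|\sum_{k\in I_j}e(k^2\beta)\big|\lesssim L_j\,(q^{-1}+L_j^{-1}+q\,L_j^{-2})^{1/2}\log(2q)$. In the regime $q>N^{2/3}$ we have $q^{-1}<N^{-2/3}$ and $L_j^{-1}\lesssim N^{-(2/3+\epsilon/3-\epsilon^2)}$, while the decisive term is $q\,L_j^{-2}\lesssim N^{4/3}\cdot N^{-2(2/3+\epsilon/3-\epsilon^2)}=N^{-2\epsilon/3+2\epsilon^2}$; taking the square root and absorbing $\log(2q)\lesssim\log N$ into a small power of $N$ yields $\big|\sum_{k\in I_j}e(k^2\beta)\big|\lesssim L_j\,N^{-\epsilon/3+\epsilon^2}\log N\le L_j\,N^{-\epsilon/7}$ for small $\epsilon$, which is (\ref{qlarge}).

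The crux --- and the reason the lemma is not automatic --- is the \emph{mismatch of scales}: the Dirichlet and Weyl data are controlled at scale $N$ (the natural scale for $\sum_{k\le N}e(k^2\beta)$, where the cutoff is $q\le N^{4/3}$), but we must bound sums over the far shorter block $I_j$, whose length $L_j\approx 1/\rho'$ is only a fixed power of $N$ above $N^{2/3}$. Everything rests on the hypothesis $\rho'(x)\lesssim x^{-(2/3+\epsilon)}$ keeping $L_j$ that far above $N^{2/3}$: this is what makes the number of terms per residue class, $L_j/q$, large in the major-arc regime, and what keeps $q\,L_j^{-2}\le N^{4/3}L_j^{-2}$ below a negative power of $N$ in the minor-arc regime --- that last term degenerates to $O(1)$ precisely as $\epsilon\to0$, which is exactly why the threshold in Theorem~\ref{threshold} sits at $\rho'(x)\sim x^{-1}$ and why $\epsilon$ must appear in the hypothesis at all. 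Once $L_j\ge N^{2/3+c\epsilon}$ for a fixed $c>0$ is in hand, controlling the Euler--Maclaurin and Riemann-sum remainders and checking the three Weyl terms are each $\lesssim N^{-c\epsilon}$ is routine bookkeeping rather than a real obstacle.
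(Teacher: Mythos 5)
Your proof is correct and follows the same high-level structure as the paper's: split on the size of $q$ relative to $N^{2/3}$, exploit the running bound $L_j\gtrsim N^{(1-\epsilon)(2/3+\epsilon)}$ to get a fixed power of $N$ above $N^{2/3}$, and in the major-arc case compare $\sum_{k\in I_j}e(k^2\beta)$ to $\hat\Lambda(p/q)V_j(\beta-p/q)$ while in the minor-arc case bound the exponential sum outright. The difference is purely in execution: the paper transplants the Rosenblatt--Wierdl machinery wholesale --- the analogue of (2.27) for the rational-point comparison, the analogue of (2.29) for the Gauss-sum bound, and Lemma~2.13 to transfer both estimates from $p/q$ to nearby $\beta$ with a multiplicative loss $(NL_j|\beta-p/q|+1)$ --- whereas you rederive both estimates directly, splitting into residue classes mod $q$ with an Euler--Maclaurin/Riemann-sum argument for the major arc (making the transfer factor appear as $q(1+|\theta|NL_j)$) and invoking Weyl's inequality over the interval $I_j$ for the minor arc. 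The net bounds are of the same order, and your route is somewhat more self-contained at the cost of reproving what \cite{PETHA} packages as reusable lemmas; it also makes the heuristic --- that the whole lemma hinges on $L_j$ sitting a definite positive power above $N^{2/3}$, with the $q L_j^{-2}$ term degenerating precisely as $\epsilon\to0$ --- more visible than in the paper's terse presentation.
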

\begin{proof}
(\ref{qsmall}) is the equivalent of (2.25), replacing the exponential sums beginning at 0 with the sums along the interval $\{l:\sqrt l\in I_j\}$. As in (2.27),
\begin{eqnarray*}
\left| \sum_{k\in I_j}e(k^2p/q) - \hat\Lambda(p/q)V_j(0)\right| \leq Cq
\end{eqnarray*}
clearly holds with a universal constant.  Thus we may apply Lemma 2.13 (note that we are summing in $l$, over $\leq 2NL_j$ terms).  Thus we find
\begin{eqnarray*}
\left| \sum_{k\in I_j}e(k^2\beta) - \hat\Lambda(p/q)V_j(\beta-p/q)\right| \leq Cq(NL_j|\beta-p/q|+1)\leq CN^{-\epsilon/6}L_j,
\end{eqnarray*}
using for the $Cq$ term the assumption $q\leq N^{2/3}$ and the fact that for $j>\rho(N^{1-\epsilon})$, we have $$L_j\gtrsim N^{(1-\epsilon)(2/3+\epsilon)}\gtrsim N^{2/3+\epsilon/6}.$$
Similarly, (\ref{qlarge}) is the analogue of (2.28), and the required estimate
\begin{eqnarray*}
\left| \sum_{k\in I_j}e(k^2p/q)\right| \leq C(\frac{L_j}{\sqrt q} + \sqrt{q \log L_j}),
\end{eqnarray*}
like (2.29), relies only on the fact that the squares are summed along an interval. We therefore apply Lemma 2.13 to obtain
\begin{eqnarray*}
\left| \sum_{k\in I_j}e(k^2\beta)\right| \leq C(\frac{L_j}{\sqrt q} + \sqrt{q \log L_j})(NL_j|\beta-p/q|+1),
\end{eqnarray*}
which for $q>N^{2/3}$ is indeed bounded by $CN^{-\epsilon/7}L_j$.
\end{proof}
Thus we have a satisfactory bound for $q>N^{2/3}$, while for $q\leq N^{2/3}$ and $|\beta-p/q|\leq q^{-1}N^{-4/3}$, we have
\begin{eqnarray*}
\hat\mu_N(\beta)= \frac1N\sum_{j=0}^{\lfloor \rho(N) \rfloor} e(j\beta)\hat\Lambda(p/q)V_j(\beta-p/q) + O\left(\frac{L_j}N+N^{-\epsilon/6}\right).
\end{eqnarray*}
Now 
\begin{eqnarray*}
\frac1N\sum_{j=0}^{\lfloor \rho(N) \rfloor} e(j\beta)V_j(\beta-p/q)
\end{eqnarray*} is a Cesaro mean of the averages
\begin{eqnarray}
\label{expos}
\frac1{N^2}\sum_{j=0}^{\lfloor \rho(N) \rfloor} e(j\beta)\sum_{l:\sqrt l\in I_j}e(l(\beta-p/q)),
\end{eqnarray}
 and so it suffices for Theorem \ref{threshold} to show that these averages decay to 0 (away from $\beta=0$) at a rate depending only on $\rho$.  Let $\alpha=\beta- p/q$.  For $\alpha\neq0$,
\begin{eqnarray*}
\frac1{N^2}\sum_{j=0}^{\lfloor \rho(N) \rfloor} e(j\beta)\sum_{l:\sqrt l\in I_j}e(l\alpha)&=& \frac1{N^2}\sum_{j=0}^{\lfloor \rho(N) \rfloor}e(j\beta)\frac{e(\lceil\phi(j+1) \rceil\alpha)-e(\lceil\phi(j) \rceil\alpha)}{e(\alpha)-1}
\end{eqnarray*}
where $\phi(j):= (\rho^{-1}(j))^2$.  We use the sum version of integration by parts, i.e. $\sum_{j=0}^m (a_{j+1}-a_j)b_j=a_{m+1}b_m-a_1b_0+\sum_{j=0}^m a_j(b_{j-1}-b_j)$, with $a_j:=N^{-2}\sum_{i=0}^{j-1} e(i\beta)$ and $b_j:=\frac{e(\lceil\phi(j+1) \rceil\alpha)-e(\lceil\phi(j) \rceil\alpha)}{e(\alpha)-1}$.  We evaluate the end terms first, noting that
\begin{eqnarray*}
|a_{j+1}b_j|\leq\frac2{N^2|\beta|}\left|\frac{e(\lceil\phi(j+1) \rceil\alpha)-e(\lceil\phi(j) \rceil\alpha)}{e(\alpha)-1}\right|\leq\frac{C(\phi(j+1)-\phi(j))}{N^2|\beta|}=O\left(\frac{L_j}{N|\beta|}\right).
\end{eqnarray*}
Now the main sum is
\begin{eqnarray*}
\left|\sum_{j=0}^{\lfloor \rho(N) \rfloor} a_j(b_{j-1}-b_j)\right|&=&\left| \frac1{N^2}\sum_{j=0}^{\lfloor \rho(N) \rfloor} \frac{1-e(j\beta)}{1-e(\beta)}\frac{e(\lceil\phi(j+1) \rceil\alpha)-2e(\lceil\phi(j) \rceil\alpha)+e(\lceil\phi(j-1) \rceil\alpha)}{1-e(\alpha)}\right|\\
&\leq&\frac2{N^2|\beta||\alpha|}\left| \sum_{j=0}^{\lfloor \rho(N) \rfloor}e(\lceil\phi(j+1) \rceil\alpha)-2e(\lceil\phi(j) \rceil\alpha)+e(\lceil\phi(j-1) \rceil\alpha)\right|\\
&\leq& \frac C{N^2|\beta||\alpha|}\sum_{j=0}^{\lfloor \rho(N) \rfloor} \phi''(j+1)|\alpha|\leq \frac{C\phi'(N)}{N^2|\beta|}=O\left(\frac{L_{\lfloor \rho(N) \rfloor}}{N|\beta|}\right),
\end{eqnarray*}
using the monotonicity of $\rho$ and its derivatives (and thus the monotonicity of $\phi''$) to justify the second inequality.  \\ \\
If $\alpha=0$, then by the same methods,
\begin{eqnarray*}
\frac1{N^2}\sum_{j=0}^{\lfloor \rho(N) \rfloor} e(j\beta)|\{l:\sqrt l\in I_j\}|=\frac1{N^2}\sum_{j=0}^{\lfloor \rho(N) \rfloor} e(j\beta)(\lceil\phi(j+1) \rceil-\lceil\phi(j) \rceil)=O\left(\frac{L_{\lfloor \rho(N) \rfloor}}{N|\beta|}\right).
\end{eqnarray*}
Therefore we have proved that
\begin{eqnarray*}
|\hat\mu_N(\beta)| \lesssim N^{-\epsilon/7}+ \frac{L_{\lfloor \rho(N) \rfloor}}{N|\beta|},
\end{eqnarray*}
which clearly establishes that the sequence $\{\mu_N\}$ has asymptotically trivial transforms.
\\ \\
Now we turn to the second claim, that if $\rho'(x)\leq Cx^{-1}$ (thus $\rho(x)\leq C\log x$), then $k^2+\lfloor \rho(k)\rfloor$ is persistently universally $L^1$-bad.  This follows from the argument of \cite{LaVic2}.  Given such a $\rho$ and any subsequence $\{N_k\}$, we choose a further subsequence $\{k_i\}$ such that modulo any squarefree odd $Q$, $\lfloor\rho(\frac12N_{k_i})\rfloor$ has a limit $r_Q$ as $i\to\infty$. (This is done by a diagonal argument, since we only need to ensure that this happens modulo the product of the first $M$ primes, for each $M$.)  Then if we restrict to this subsequence, we see that the quadratic residues translated by $r_q$ serve as the $\Lambda_q$ in Theorem 3.1 of that paper, that (3.1)-(3.4) hold for the same reasons as for the original quadratic residues, and that for any nonzero quadratic residue $a$ modulo a squarefree and odd $Q$ with sufficiently large factors,
\begin{eqnarray*}
& &\liminf_ {k\to\infty} \frac1{N_k}|\{1\leq j\leq N_k: j^2+\lfloor\rho(j)\rfloor \equiv a+r_Q \mod Q\}|\\
&\geq& \liminf_ {k\to\infty} \frac1{N_k}|\{1\leq j\leq N_k: j^2\equiv a \mod Q, \lfloor\rho(j)\rfloor = \lfloor\rho(N_k/2)\rfloor\}|\\
&\geq& \liminf_ {k\to\infty}|\Lambda_Q|^{-1}\frac1{2N_k}|\{1\leq j\leq N_k: \lfloor\rho(j)\rfloor = \lfloor\rho(N_k/2)\rfloor\}| -\frac1Q\\
&\geq& \frac1{3C|\Lambda_Q|}
\end{eqnarray*}
using the fact that $\rho'(x)\leq \frac{4C}N\,\forall x\geq N/4$, so that $|\{1\leq j\leq N_k: \lfloor\rho(j)\rfloor = \lfloor\rho(N_k/2)\rfloor\}|\geq\min\{\frac{N}{4C},\frac{N}4\}$, and the fact that $|\Lambda_Q|\ll Q$ for $Q$ large.  Note that Theorem 4.1 in \cite{LaVic2} implies this variant of Theorem 3.1, since (4.7) is the only use of (3.5) in that paper.  Therefore any subsequence of the $\mu_N$ has a further subsequence which is universally $L^1$-bad, which implies our desired result.

\subsection*{Acknowledgements}
The author thanks J. Rosenblatt for introducing him to these interesting problems, for pointing out the article \cite{MR2177432}, and for many suggestions; and his dissertation advisor, M. Christ, for several major ideas, including treating the intermediate terms in Theorem \ref{weakmax} separately after the fashion of \cite{STW}.
\\ \\This research was partly supported by the NSF (Grant DMS-0401260.).

\bibliography{BibLaVic03}{}
\bibliographystyle{plain}

\end{document}